\theoremstyle{plain}
\newtheorem{lem}{Lemma}[section]
\newtheorem{thm}[lem]{Theorem}
\theoremstyle{definition}
\newtheorem{Def}[lem]{Definition}
\newcommand{\bn}{\mathbf{n}}
\newcommand{\hp}{\phi_i}
\newcommand{\Sp}{\psi_i}
\newcommand{\fd}{\mathcal{L}(\bu;\bb)}
\newcommand{\bcu}{\mathbf{U}}
\newcommand{\bcm}{\mathbf{M}}
\newcommand{\bu}{\mathbf{u}}
\newcommand{\bv}{\mathbf{v}}
\newcommand{\bw}{\mathbf{w}}
\newcommand{\bb}{\mathbf{b}}
\newcommand{\bm}{\mathbf{m}}
\newcommand{\bc}{\mathbf{c}}
\newcommand{\gu}{\Gamma(\bu)}
\newcommand{\gut}{\Gamma(\bu(t))}
\newcommand{\mce}{\mathcal{E}}
\newcommand{\ms}{\mathbb{S}}
\newcommand{\but}{\bu_t}
\newcommand{\A}{\mathcal{A}}
\DeclareMathOperator{\tr}{tr}
\newcommand{\ve}{\mathbb{V}}
\newcommand{\h}{\mathbb{H}}
\newcommand{\bus}{\bu(s)}
\newcommand{\bbs}{\bb(s)}
\newcommand{\eps}{\varepsilon}
\DeclareMathOperator{\curl}{curl}
\DeclareMathOperator{\Div}{div}
\DeclareMathOperator{\Span}{Span}
\title[Qualitative properties of the solution to MHD equations for bipolar fluids]{Some qualitative properties of the solution to the Magnetohydrodynamic equations for nonlinear bipolar fluids}
\author[P. A. Razafimandimby]{Paul Andr\'e RAZAFIMANDIMBY}
\email[P.A. Razafimandimby]{paul.razafimandimby@unileoben.ac.at}
\address{Department of Mathematics and Information Technology\\
Montanuniversit\"at Leoben\\ Franz Josef Strasse  18, 8700 Leoben,
     Austria}
\date{\today}
\begin{document}
\begin{abstract}
In this article we study the long-time behaviour of a system of nonlinear Partial Differential Equations (PDEs) modelling
the motion of incompressible, isothermal and conducting bipolar fluids in presence of magnetic field. We mainly prove the
existence of a global attractor denoted by $\A$ for the semigroup associated to the aforementioned systems of nonlinear PDEs. We also show that this semigroup
is uniformly differentiable on $\A$. This fact enables us to go further and prove that the attractor $\A$ is of finite-dimensional and we give an explicit bounds for its
Hausdorff and fractal dimensions.
\end{abstract}
\subjclass[2000]{76W05, 35D30, 35B40, 35B41, 35K55}
\keywords{Non-Newtonian fluids, Bipolar fluids, Shear thinning
fluids, MHD, Magnetohydrodynamics, Asymptotic behavior, Long-time behavior, Global
attractor, Hausdorff Dimension, Fractal Dimension} \maketitle
\section{Introduction}
Magnetohydrodynamics (MHD) is a branch of continuum mechanics
which studies the motion of conducting fluids in the presence of
magnetic fields. The system of PDEs in
MHD is basically obtained through the coupling of the dynamical
equations of the fluids with the Maxwell's equations which are used
to take into account the effect of the Lorentz force due to the
magnetic field (see for example \cite{BISKAMP, Chandrasekhar}). The MHD equations play a
fundamental role in Astrophysics, Geophysics, Plasma Physics, and
in many other areas in applied sciences. In many of these, the MHD
flow exhibits a turbulent behavior which is amongst the very
challenging problems in nonlinear science. Due to the folklore fact that the Navier-Stokes (NS) system is
 an accurate model for the motion of incompressible and turbulence in many practical situation, most of scientists have assumed that the fluids in these MHD equations
follow the Newtonian law in which the reduced stress tensor
${\mathbf{T}}(\mathcal{E}(\bu))$ is a linear function of the strain rate $\mathcal{E}(\bu)=\frac{1}{2}\left(\mathbf{\ L}+\mathbf{L}^{\text{T}%
}\right),\quad \mathbf{L}=\nabla{\bu}$, where $\bu$ is the velocity of the fluids. In this way we obtain the conventional system for MHD which has been the object of intensive mathematical research
since the pioneering work of Ladyzhenskaya and Solonnikov
\cite{LADY-SOLO}. We only cite \cite{DESJARDINS}, \cite{LEBRIS}, \cite{SERMANGE}, \cite{STUPELIS} for few
relevant examples of articles about conventional MHD; the reader can consult \cite{GERBEAU+LEBRIS}
for a recent and detailed review.

 There are two major problems that arise when studying the conventional MHD equations or hydrodynamic equations. First, it is well known that the uniqueness of weak solution of
the three-dimensional NSEand MHD equations is still an open problem. As it is always not possible to prove the existence of global attractor in
the case of lack uniqueness of solution, this is an obstacle for the investigation of the long-time behavior which is very important for the understanding
 of some physical features (such as the turbulence in
hydrodynamics) of the models. We refer, for instance, to
\cite{BABIN+VISHIK}, \cite{Robinson}, and \cite{Temam-Infinite} for
some results in this direction for the case of autonomous
Navier-Stokes and other autonomous equations of mathematical
physics. Second,
there are a lot of conducting materials
 that cannot be characterized by Newtonian fluids. To overcome these two problems one generally has to use other model of fluids or some perturbation of the Newtonian law. This has motivated
scientists to consider (conducting) fluid models that allow $%
{\mathbf{T}}$ being a nonlinear function  of $\mathcal{E}(\bu)$. Fluids in
the latter class are called Non-Newtonian fluids. In \cite{LADY1} and \cite%
{LADY2}, Ladyzhenskaya considered a model of nonlinear fluids whose reduced
stress tensor ${\mathbf{T}}(\mathcal{E}(\bu))$ satisfies
\begin{equation*}
{\mathbf{T}}(\mathcal{E}(\bu))=2(\varepsilon+\mu_0|\nabla \bu|^r)\frac{%
\partial \bu_i}{\partial x_j}, \,\, r>0.
\end{equation*}
Since then this model has been the object of intense mathematical analysis
which have generated several important results. We refer to \cite{DU}, \cite{LIONS} for some relevant examples. In \cite{DU} the authors emphasized
important reasons for considering such model. Despite its mathematical success the Ladyzhenskaya model has received a lot of negative criticisms from physicists. Indeed
the Ladyzhenskaya model is a mathematical model used to overcome the lack of uniqueness for the NS equations; it does not have really
a physical meaning  as it does not satisfy some basic principles of continumm mechanics (the frame indifference principle) and thermodynamics (the Clausius-Duhem inequality).
Ne\v cas, Novotn\'y and Silhavy
\cite{NECAS1}, Bellout, Bloom and Ne\v cas \cite{BELLOUT2} has
developed the theory of multipolar viscous fluids which was based
on the earlier work of Ne\v cas and \v Silhav\'y \cite{NECAS2}. Their theory is
compatible with the basic principles of thermodynamics such as the
Clausius-Duhem inequality and the principle of frame indifference,
and their results up to date indicate that  the theory of
multipolar fluids may lead to a better understanding of
hydrodynamic
turbulence (see for example \cite{BELLOUT3}). Bipolar fluids whose reduced stress tensor ${%
\mathbf{T}}(\mathcal{E}(\bu))$
is defined by
\begin{equation}\label{tens-bip}
{%
\mathbf{T}}(\mathcal{E}(\bu))=2\mu_0(\varepsilon+ |\mathcal{E}(\bu)|^2)^{-\frac{\alpha}{2}}%
\mathcal{E}(\bu)-2\mu_1\Delta\mathcal{E}(\bu),
\end{equation}
form a particular class of multipolar fluids.  If $0\le \alpha<1$ then
the fluids are said shear thinning, and shear thickening when
$\alpha<0$.

 In this article we
are aiming to the long-time behaviour of the system of nonlinear PDEs representing the motion of a conducting nonlinear bipolar fluids in presence of magnetic fields.
 For this purpose, we consider a simply-connected and bounded domain $\Omega$ of $\mathbb{R}^n$ ($n=2,3$) such that the boundary $\partial
\Omega$ is of class $C^\infty$. This will ensure the existence of a
normal vector $\bn$ at each of $\partial \Omega$. We are interested in the analytic study of the following system
\begin{equation}  \label{I}
\begin{cases}
\frac{\partial\bu}{\partial t}-\Div \mathbf{T}+\bu\cdot \nabla \bu+\mu \bb\times\curl \bb+ \nabla P=f \text{ in }\Omega\times (0,\infty),  \\
\frac{\partial \bb}{\partial t}+\left(S \curl \curl \bb+\mu \bu\cdot \nabla \bb-\mu \bb\cdot \nabla \bu\right)=0 \text{ in } \Omega\times (0, \infty)\\
\Div \bu=\Div \bb=0\text{   in } \Omega\times [0,\infty),\\
\bu=\tau_{ijl}\bn_j\bn_l=0  \text{ on } \partial \Omega\times [0,\infty),\\
\bb\cdot \bn=\curl \bb \times \bn =0 \text{ on } \partial \Omega \times [0,\infty),\\
\bu(0)=\bu_0, \,\,\, \bb(0)=\bb_0 \text{ in } \Omega,
\end{cases}%
\end{equation}
where $\bu=(u_i; i=1,\ldots, n)$, $\bb=(b_i;i=1,\dots, n)$ and $P$ are functions defined on $\Omega \times [0,T]$,
representing, respectively, the fluid velocity, the magnetic field
and the modified pressure, at each point of $\Omega\times [0,\infty]$. $S$
and $\mu$ are positive constants depending on the Reynolds numbers
of the fluid and magnetic fields,  and the Hartman number.
 The quantities $\bu_0$ and $%
\bb_0$ are given initial velocity and magnetic field,
respectively. The
vector $\bn$ represent the normal to $\partial \Omega$ and 
$\tau_{ijl}$ is defined by
\begin{equation*}
\tau_{ijl}=\frac{\partial \mathcal{E}_{ij}(\bu)}{\partial x_l}, i,j, l\in \{1,..,n \}.
\end{equation*}
Finally, $\mathbf{T}$ designates the extra stress
tensor of the Non-Newtonian fluid which is defined by \eqref{tens-bip}. We suppose throughout that $\eps, \mu_0, \mu_1$ are positive constants and $\alpha \in [0,1)$.
Hereafter we set
\begin{equation*}
 \Gamma(\bu)=\mu_0\left(\eps+|\mce(\bu)|^2\right)^{-\frac{\alpha}{2}}.
\end{equation*}
The structure of the nonlinearity of problem \eqref{I} makes it as
interesting as any nonlinear evolution equations of mathematical
physics such as the conventional MHD or the Navier-Stokes equations.

 When $\bb\equiv0$ then
\eqref{I} reduces to the PDEs describing the motion of isothermal
incompressible nonlinear bipolar fluids which has been extensively
investigated during the last two decades. Existence and uniqueness results of weak solution for nonlinear bipolar fluids with homogeneous boundary condition were given in
\cite{BELLOUT1}, \cite{BELLOUT4}. Existence of unique solution and asymptotic stability of the solutions for the case
 for nonhomogeneous boundary condition were investigated in \cite{BELLOUT5}. The long-time behaviour of \eqref{I} with $\bb\equiv 0$ is investigated in \cite{BELLOUT3} for
$\alpha \in [0,1)$ and in \cite{Bloom} for $\alpha \in (-\infty, 0)$.  Both of the authors of \cite{BELLOUT3} and \cite{Bloom} proved the existence of a global attractor;
they also showed that the flow is finite-dimensional by giving explicit bounds for the Hausdorff and fractal dimensions of the global attractor. Existence results
 for multipolar fluids are also given in \cite{MALEK} and \cite{NECAS1}. These are just examples of relevant work related to \eqref{I} with $\bb\equiv 0$, the papers 
 \cite{MALEKetal} and \cite{RUZICKA-2} contain some reviews of results for nonlinear bipolar fluids.

For $\alpha=0,\,\, \mu_1=0,$ \eqref{I} is reduced to MHD equations
which has been the object of intensive mathematical research
since the pioneering work of Ladyzhenskaya and Solonnikov
\cite{LADY-SOLO}. We only cite \cite{STUPELIS},
\cite{SERMANGE}, \cite{LEBRIS}, \cite{DESJARDINS} for few
relevant examples; the reader can consult \cite{GERBEAU+LEBRIS}
for a recent and detailed review.  Assuming that $\mu_1=0$
Samokhin studied the MHD equations arising  from the coupling of
the Ladyzhenskaya model with the Maxwell equations in
\cite{SAMOKHIN}, \cite{SAMOKHIN2}, \cite{SAMOKHIN3}, and
\cite{SAMOKHIN4}. In these papers he  proved the existence of weak
solution of the model for $\alpha\le 1-\frac{2n}{n+2}$. Later on
Gunzburger and his collaborators (see \cite{GUNZBURGER} and \cite{GUNZBURGER2}) generalized the settings of
Samokhin by taking a fluid with a stress tensor having a more
general structure.
The authors of \cite{GUNZBURGER} and \cite{GUNZBURGER2} analyzed the well-posedness and
the control of \eqref{I} still in the case where $\mu_0=0$ and
$\alpha \le 1-\frac{2n}{n+2}$.

To the best of our knowledge the article \cite{PAUL-16} is the only work treating the case $\{\mu_1\neq 0, \bb\not \equiv 0,
\alpha \in [1-\frac{2n}{n+2}, 1)\}$.  In fact $\alpha=2-p$ in \cite{PAUL-16} and the author proved in \cite{PAUL-16} that if $(\bu_0;\bb_0)\in \h$ and $f\in \ve^\ast$, then
 \eqref{I} with $\eps=1$ (but the author's result is still true for any $\eps>0$) has at least a
global weak solution.
This means that there exists a couple $(\bu;\bb)$ such that
\begin{itemize}
 \item $(\bu;\bb)\in L^\infty_{loc}(0,\infty; \ve)\cap L^2_{loc}(0,\infty; \h)$,
\item $(\bu;\bb)$ satisfies
\begin{multline}
 \left(\frac{\partial \bu(t) }{\partial t}, \phi\right)+\mu_1\left(\frac{\partial \mce(\bu(t))}{\partial x_k}, \frac{\partial \mce(\phi)}{\partial x_k}\right)+\left(\gut\mce(\bu(t)),
 \mce(\phi)\right)
+\left(\bu(t)\cdot \nabla \bu(t), \phi\right)\\-\mu \left(\bb(t)\cdot \nabla \bb(t), \phi\right)=(f,\phi),\label{10}
\end{multline}
and
\begin{equation}
\left(\frac{\partial \bb(t) }{\partial t}, \psi\right)-S\left(\Delta \bb(t), \psi \right)+\mu (\bu(t)\cdot \nabla \bb(t), \psi)-\mu (\bb(t)\cdot \nabla \bu(t), \psi)=0,\label{11}
\end{equation}
for any couple $\Phi=(\phi;\psi)$ of smooth solenoidal functions and $t\in [0,T]$, $T>0$. In \eqref{10} and throughout this work summations over repeated indices are enforced.
\end{itemize}
The author also analyzed the long-time behaviour of solutions of \eqref{I} with $\alpha \in [1-\frac{2n}{n+2}, 1)$. He mainly proved the existence of trajectory attractor for the translation semigroup
acting on the trajectories of the set of weak solutions and that
of external forces. 

In the case when $\alpha \in [1-\frac{2n}{n+2}, 1)$ we are not sure of the uniqueness of the weak solution, therefore the author of \cite{PAUL-16} follows
the method in \cite{VISHIK+CHEPYZHOV} (see also, \cite{VISHIK+CHEPYZHOV-2002} and \cite{VISHIK+CHEPYZHOV+WENDLAND}). Howerver, 
for $0\le \alpha<1$ (which correspond to $1<p\le 2$ in the paper \cite{PAUL-16}) we can show that the weak solution is unique. We will formally check this statement in the proof of Theorem 
\ref{DIFF-SEM}. This fact enables us to define the nonlinear semigroup $\{\mathbb{S}(t); t\ge 0\}$ of the solution of \eqref{I}. 
The semigroup is formally defined by `` let $t>0$ to each initial value $(\bu_0; \bb_0)\in \h$ we associate an element $\mathbb{S}(t)(\bu_0;\bb_0)=(\bu(t); \bb(t))$ of $\h$ which is the
 unique weak wolution of \eqref{I}.'' It is our purpose to study the long time behavior of this semigroup. 
Our results are as follow:
\begin{enumerate}
 \item We give the existence of a compact global attractor $\A$ for $\mathbb{S}(t)$. For this purpose we need to find absorbing sets in $\h$ and in $\ve$.
Since $\ve$ is compact in $\h$ then
we can define from classical argument the existence of the compact global attractor $\A$.
\item We show that the semigroup $\ms(t)$ is differentiable with respect to the initial data $(\bu_0; \bb_0)\in \A$. This result is crucial for establishing the next result.
\item We give an estimate for the bounds of the Hausdorff and fractal dimensions of $\A$.
\end{enumerate}
To establish these results we mainly follow the idea in \cite{BELLOUT3} and the classical results for the investigation of long-time behaviour of dissipative PDEs presented in
\cite{BABIN+VISHIK}, \cite{Robinson} and \cite{Temam-Infinite} for example.
We should mention from the very beginning that every calculation we performed is formal, but we can check them rigourously by using the Galerkin approximation and pass to the limit as
it was done in \cite{PAUL-16}. Also, 
even if we drew our inspiration from \cite{BABIN+VISHIK}, \cite{BELLOUT3}, \cite{DUVAUT+LIONS}, \cite{Robinson}, \cite{SERMANGE} and \cite{Temam-Infinite}
the problem we treated here does not fall in
the framework of these main references.
 Besides the usual nonlinear terms of the conventional MHD equations
it contains another nonlinear term which
exhibits the non-linear relationships between the reduced stress
and the rate of strain $\mathcal{E}(\bu)$ of the conducting
fluids. Because of this, the analysis of the behavior of the MHD
model \eqref{I} tends to be much more complicated and subtle than
that of the Newtonian MHD equations and the model in \cite{BELLOUT3}. Hence, we have had to invest
much effort  to prove many important results which do not follow
from the analysis in the aforementioned papers.

The organization of this article is as follows. We introduce the necessary notations for the mathematical theory of
\eqref{I} in the next section. In Section 3 we prove the existence of absorbing sets in $\h$ and in $\ve$ which enables us to show the existence of a global
attractor for the nonlinear semigroup of solutions to \eqref{I}. The uniformly differentiability of $\ms(t)$ on $\A$ is establisehd in Section 4. In the last section we
give explicit bounds for the fractal dimension of the global attractor $\A$.
\section{Preliminary}
We introduce some notations and background following the
mathematical theory
of hydrodynamics (see for instance \cite{Temam}). For any $p\in (1,\infty)$, $\mathbb{L}^p(\Omega)$ and $\mathbb{W}^{m,p}(\Omega%
)$ are the spaces of functions taking values in $\mathbb{R}^n$
such that each component belongs to the Lebesgue space
$L^p(\Omega)$ and the Sobolev spaces $W^{m,p}(\Omega)$,
respectively. For $p=2$ we use $\h^m(\Omega)$ to describe
 $\mathbb{W}^{m,p}(\Omega)$.
The symbols $|\cdot|$ and $(.,.)$ are the $\mathbb{L}^2$-norm and $\mathbb{L}^2$-inner product, respectively. The norm of $\h^{m}(\Omega%
)$ (resp., $\mathbb{W}^{p,m}(\Omega)$) is denoted by $||\cdot||_{m}$ (resp., $\lVert \cdot \rVert_{p,m}$). As usual,
$\mathbb{C}^\infty_0(\Omega)$ is the space of infinitely
differentiable functions having compact support contained in
$\Omega$. The space $\mathbb{W}^{p,m}_0(\Omega)$ is the closure of
$\mathbb{C}^\infty_0(\Omega)$ in $\mathbb{W}^{p,m}(\Omega)$. Now we
introduce the following spaces
\begin{align*}
 \mathcal{V}_1 &=\left\{\bu\in \mathbb{C}^\infty_0(\Omega): \Div \bu=0\right\},\\
\mathbb{H}_1 &=\left\{\bu \in \mathbb{L}^2(\Omega): \Div \bu =0, \bu\cdot \bn= 0 \text{ on } \partial \Omega \right\},\\
\mathbb{V}_1&=\left\{ \bu \in \mathbb{H}^2(\Omega): \Div
\bu=0, \bu=\frac{\partial \bu }{\partial \bn}=0 \text{ on }
\partial \Omega\right\}.
\end{align*}
We also set
\begin{align*}
 \mathcal{V}_2&=\left\{\bb \in \mathbb{C}^\infty_0(\Omega): \Div \bb=0;\, \bb\cdot \bn=0 \text{ on } \partial \Omega\right\},\\
\mathbb{H}_2&=\text{ the closure of $\mathcal{V}_2$ in $\mathbb{L}^2(\Omega)$},\\
\mathbb{V}_{2}&=\left\{ \bb\in \mathbb{H}^1(\Omega): \Div
\bb=0;\, \bb\cdot \bn=0 \text{ on } \partial \Omega\right\},\\
\mathbb{V}_3&=\left\{\bb\in \mathbb{H}^2(\Omega): \Div
\bb=0;\, \bb\cdot \bn=\curl \bb\times \bn=0 \text{ on } \partial \Omega\right\}
\end{align*}
Note that
\begin{equation*}
 \mathbb{H}_1=\mathbb{H}_2.
\end{equation*}
We equip the space $\mathbb{V}_{1}$ with the norm $||\cdot||_{2}$ generated by the usual $\h^2(\Omega)$-scalar product.

On $\mathbb{V}_{2}$ we define the scalar product
\begin{equation*}
 ((\bu, \mathbf{v}))_{\ve_2}=(\curl \bu, \curl \mathbf{v}),
\end{equation*}
which is equivalent to the usual scalar product of $\mathbb{H}^1(\Omega)$. Hence, from now on $\lVert \bb\rVert_1$ we will denote the norm of $\bb\in \ve_2$ in $\h^1(\Omega)$ and
 $\lVert \bb\rVert_{\ve_2}=\lvert \curl \bb\rvert$ as well.

Let
\begin{align*}
 \mathbb{V}=\mathbb{V}_{1}\times \mathbb{V}_2,\\
\mathbb{H}=\mathbb{H}_1\times \mathbb{H}_2.
\end{align*}
The space $\mathbb{H}$ has the structure of a Hilbert space when equipped with the scalar product
\begin{equation}\label{eq1}
 (\Phi,\Psi)=(\bu, \mathbf{v})+(\bb, \mathbf{C}),
\end{equation}
for $\Phi=(\bu; \bb), \Psi=(\mathbf{v}; \mathbf{C})\in \mathbb{H}$.

The space $\mathbb{V}$ is a Banach space with norm
\begin{equation}\label{eq2}
 ||\Phi||_\ve=||\bu||_2+||\bb||_1,
\end{equation}
for $\Phi=(\bu; \bb)\in \ve$.

For any Banach space $X$ we denote by $X^\ast$ its dual space and $\langle \phi, \bu\rangle$ the value of $\phi\in X^\ast$ on $\bu \in X$.

For any any $\bu\in \mathbb{W}^{k+1,p}(\Omega), k\ge0,$ we set
\begin{equation*}
 \mathcal{E}(\bu)=\frac{1}{2}\Big[(\nabla \bu)+(\nabla \bu)^T\Big].
\end{equation*}
Let us recall the following results whose proofs can be found in \cite{RUZICKA}.
\begin{lem}[\textbf{Korn's inequalities}]
 Let $1<p<\infty$, $\beta=(\beta_1, \dots, \beta_l)$ be a multi-index such that $k=\sum_{i=1}^l \beta_i$, and $\Omega\subset \mathbb{R}^n$ be a bouned domain with smooth boundary.
 Then there exists a positive constant $K(\Omega)$ such that
\begin{equation}
 K(\Omega)||\bu||_{p, k+1}\le \left(\int_\Omega \biggl|\frac{\partial^k \mathcal{E}(\bu)}{\partial x_{\beta_l}.\partial x_{\beta_l}}\biggr|^p dx\right)^\frac{1}{p},
\end{equation}
for any $\bu\in \mathbb{W}^{k+1,p}(\Omega)$.
\end{lem}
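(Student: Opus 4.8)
The plan is to view the stated estimate as a higher-order Korn inequality and to derive it from one elementary pointwise identity together with one genuinely analytic input. Writing $\partial_j=\partial/\partial x_j$ and $\mce_{ij}(\bu)=\tfrac12(\partial_j u_i+\partial_i u_j)$, the starting point is the differential identity
$$\partial_j\partial_k u_i=\partial_j\mce_{ik}(\bu)+\partial_k\mce_{ij}(\bu)-\partial_i\mce_{jk}(\bu),$$
which I would verify directly: expanding the three terms on the right and using that mixed second derivatives commute, the contributions containing $u_j$ and $u_k$ cancel in pairs and only $\partial_j\partial_k u_i$ survives. Thus every second-order derivative of a component of $\bu$ is a fixed linear combination of first-order derivatives of the components of $\mce(\bu)$.

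For the range $k\ge1$ this identity essentially gives the result. Applying an arbitrary derivative of order $k-1$ to both sides turns the left-hand side into an arbitrary $(k+1)$-th order derivative of $\bu$ and the right-hand side into a combination of $k$-th order derivatives of $\mce(\bu)$; hence pointwise $|D^{k+1}\bu|\le C\,\sum_{|\gamma|=k}|D^{\gamma}\mce(\bu)|$ with $C=C(n)$, and raising to the power $p$ and integrating over $\Omega$ bounds the top-order part of the norm by the $L^p$ norms of the $k$-th derivatives of $\mce(\bu)$ figuring on the right of the statement. Crucially this step needs no harmonic analysis and works for every $p$. The remaining lower-order derivatives in the full $\mathbb{W}^{k+1,p}$ norm are then recovered by iterating Poincar\'e/Friedrichs inequalities, which are available because the boundary conditions built into the relevant spaces (for instance $\bu=\partial\bu/\partial\bn=0$ on $\partial\Omega$ in $\mathbb{V}_1$) remove the affine kernel of $\mce$; the constant $K(\Omega)$ is where this geometric information enters.

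The main obstacle is the base case $k=0$, i.e. the first Korn inequality $\lVert\bu\rVert_{p,1}\le C\,\lVert\mce(\bu)\rVert_{L^p}$, because here one can no longer express $\nabla\bu$ pointwise through $\mce(\bu)$. When $p=2$ a short integration by parts suffices, since $\int_\Omega|\mce(\bu)|^2=\tfrac12\int_\Omega|\nabla\bu|^2+\tfrac12\int_\Omega|\Div\bu|^2$ for admissible $\bu$; but this Hilbert-space shortcut collapses for $p\ne2$. The clean substitute is the Ne\v{c}as (Lions) inequality $\lVert f\rVert_{L^p(\Omega)}\le C\big(\lVert f\rVert_{W^{-1,p}(\Omega)}+\lVert\nabla f\rVert_{W^{-1,p}(\Omega)}\big)$ on bounded smooth domains: applied to $f=\partial_l u_i$ it controls $\nabla\bu$ in $L^p$ by $\bu$ in $L^p$ (from $f$) and, via the identity above, by $\mce(\bu)$ in $L^p$ (from $\nabla f$), yielding the second Korn inequality; the lower-order $\lVert\bu\rVert_{L^p}$ term is then removed by the same boundary-condition (or compactness, modulo rigid motions) mechanism as above. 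The proof of the Ne\v{c}as inequality rests on the $L^p$-boundedness of the Riesz transforms, i.e. on Calder\'on--Zygmund singular-integral theory, and it is precisely this ingredient that fails at the endpoints and forces the restriction $1<p<\infty$. I would finish with a standard localization and boundary-flattening argument reducing the global estimate to the interior and half-space model problems, following the exposition in \cite{RUZICKA}.
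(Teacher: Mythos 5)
The paper itself contains no proof to compare yours against: the lemma is quoted with the remark that its proof can be found in \cite{RUZICKA}, and nothing more is said. Judged on its own terms, your proposal is a correct reconstruction, in outline, of the standard argument from that literature. The pointwise identity $\partial_j\partial_k u_i=\partial_j\mce_{ik}(\bu)+\partial_k\mce_{ij}(\bu)-\partial_i\mce_{jk}(\bu)$ is verified exactly as you describe; differentiating it disposes of every order $k\ge 1$; and you are right that the whole analytic content sits in the base case $k=0$, where for $p\neq 2$ the first Korn inequality requires the Ne\v{c}as inequality and hence Calder\'on--Zygmund theory --- which is exactly why $1<p<\infty$ is assumed (by Ornstein's non-inequality the estimate genuinely fails at $p=1$ and $p=\infty$). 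Two caveats, both defects of the statement rather than of your argument, and both of which you partially address. First, as printed (``for any $\bu\in \mathbb{W}^{k+1,p}$'') the inequality is false: a rigid motion $\bu(x)=a+Bx$ with $B$ skew-symmetric has $\mce(\bu)=0$ and all derivatives of $\mce(\bu)$ equal to zero, yet $\lVert\bu\rVert_{p,k+1}\neq 0$. Your appeal to boundary conditions (the spaces $\mathbb{W}^{p,m}_0$, $\mathbb{V}_1$) to remove the affine kernel of $\mce$ is precisely the needed repair, is consistent with every use of the lemma in the paper, and should be made an explicit hypothesis. Second, the right-hand side must be read with the paper's summation convention, i.e. as the $L^p$ norm of the full array of $k$-th order derivatives of $\mce(\bu)$, which is how it is used in the body of the paper (e.g. $\bigl(\partial\mce(\bu)/\partial x_k,\partial\mce(\bu)/\partial x_k\bigr)$ summed over $k$); for a single fixed multi-index the claim fails even for compactly supported fields --- take $\bu$ oscillating rapidly in a direction transverse to $\beta$, so that the left-hand side grows like the $(k+1)$-st power of the frequency while the right-hand side grows only like the first power --- and your reading, which sums over all derivatives of order $k$, is the one under which both the lemma and your proof are valid.
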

The following Poincar\'e's inequality is very crucial
\begin{equation}\label{Poincare}
 \lambda_1\lvert \bu\rvert^2\le \lVert \bu\rVert^2_1,
\end{equation}
for any $\bu\in \h^1(\Omega)$. This inequality holds with $\lVert \bu\rVert_2$ for $\bu\in \h^2(\Omega)$.

We also recall that there exist two positive constants $K_1(\Omega), K_2(\Omega)$ such that
\begin{equation}\label{MISC-0}
 K_1(\Omega) \lVert \bb\rVert^2_{2}\le \lvert \Delta \bb\rvert^2\le K_2(\Omega) \lVert \bb\rVert_{2},
\end{equation}
for any $\bb\in \ve_2\cap \h^2(\Omega)$.

There also holds
\begin{equation}\label{MISC}
 |\nabla \phi|^2\le |\phi|^2+ |\curl \phi|^2,
\end{equation}
for any divergence free function $\phi$ satisfying $(\phi\cdot \mathbf{n})_{|_{\partial \Omega}}=0.$
We refer to \cite{LADYZHENSKAYA-1}
 or \cite{LADYZHENSKAYA-2} for the proofs of \eqref{MISC-0} and \eqref{MISC}.
 
 Finally, for a Banach space $\mathbb{Y}$ and a positive number $\rho$ we set $$ B_{\mathbb{Y}}^{\rho}=\{\mathbf{z}\in \mathbb{Y}; |\mathbf{z}|_{\mathbb{Y}}^2 \le \rho^2\}.$$
\section{Existence of the Global attractor $\A$}
In this section we will investigate the existence of a set $\A\subset \h$ which is the global attractor of $\mathbb{S}(t)$ in $\h$.
But first of all we recall some definitions which are taken
from \cite{Temam-Infinite}.
\begin{Def}
 A set $\A\subset \h$ is called an attractor for the semigroup $\mathbb{S}(t)$ if it enjoys the following properties:
\begin{itemize}
 \item $\A$ is invariant; that is, $\mathbb{S}(t)\A=\A,$ for any $t\ge 0$,
\item $\A$ possesses an open neighborhood $\mathcal{U}$ such that for any $\bu_0\in \mathcal{U}$
$$\lim_{t\rightarrow \infty}\inf_{y\in \A} |\mathbb{S}(t)\bu_0-y|=0. $$
\end{itemize}
The set $\A\subset \h$ is said to be a global attractor if it is a compact attractor and attracts all bounded sets of $\h$.
\end{Def}
The notion of an absorbing set is very important for the investigation of the existence of a global attractor $\A$. We also recall the definition of an absorbing set $\mathcal{B}.$
\begin{Def}
 Let $\mathcal{B}$ be a bounded set of $\h$ and $\mathcal{U}$ be an open set containing $\mathcal{B}$. We say that $\mathcal{B}$ is an abosrbing set in $\mathcal{U}$ if for any
bounded set $\mathcal{B}_0\subset \mathcal{U}$, there exists a time $t_0=t_0(\mathcal{B}_0)$ such that $\mathbb{S}(t)\mathcal{B}_0\subset \mathcal{B}$, for any $t\ge t_0$.
\end{Def}
After these definitions we formulate our first main result.
\begin{thm}\label{attractor}
 The semigroup $\mathbb{S}(t):\h\rightarrow \h, t\ge 0,$ has a global attractor $\A$ which is maximal wrt to the inclusion in $\h.$
\end{thm}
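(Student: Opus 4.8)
The plan is to prove Theorem~\ref{attractor} by the standard machinery for dissipative semigroups: establish the existence of a bounded absorbing set in the higher-regularity space $\ve$, use the compact embedding $\ve \hookrightarrow \h$ to upgrade this to asymptotic compactness of $\ms(t)$, and then invoke the classical existence theorem for the global attractor (as stated in \cite{Temam-Infinite}). First I would derive the $\h$-level \emph{a priori} estimate by testing \eqref{10} with $\phi=\bu(t)$ and \eqref{11} with $\psi=\bb(t)$, then adding. The key structural fact is that the trilinear terms cancel in the energy balance: $(\bu\cdot\nabla\bu,\bu)=0$, $(\bu\cdot\nabla\bb,\bb)=0$, and the magnetic coupling terms $\mu(\bb\cdot\nabla\bb,\bu)$ and $\mu(\bb\cdot\nabla\bu,\bb)$ cancel against each other after integration by parts. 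What survives on the dissipative side is $\mu_1|\Delta\mce(\bu)|^2$ (bounding $\|\bu\|_2^2$ via Korn and \eqref{MISC-0}), the coercive term $(\gut\mce(\bu),\mce(\bu))\ge 0$, and $S|\Delta\bb|^2$ (bounding $\|\bb\|_1^2$ via \eqref{MISC-0} and \eqref{MISC}). Using Poincar\'e \eqref{Poincare} to dominate the $\h$-norm by the dissipation and Young's inequality on $(f,\bu)$, I expect a differential inequality of the form
\begin{equation*}
\frac{d}{dt}|\Phi(t)|^2 + c_1|\Phi(t)|^2 \le c_2,
\end{equation*}
where $\Phi=(\bu;\bb)$. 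Gronwall's lemma then yields an absorbing ball $\bo$ in $\h$.

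Next I would obtain the $\ve$-level estimate, which is the analytically delicate step. Here the natural test functions are $\phi$ related to $\Delta^2\mce(\bu)$ (or equivalently testing against $\bu_t$ or against the appropriate high-order term) for the velocity equation and $\curl\curl\bb$ for the magnetic equation, so as to produce the norms $\|\bu\|_2$ and $\|\bb\|_1$ on the left. The main obstacle will be controlling the nonlinear terms---both the inertial/coupling trilinear forms and, crucially, the quasilinear term $(\gut\mce(\bu),\mce(\phi))$ arising from the shear-dependent viscosity $\gu=\mu_0(\eps+|\mce(\bu)|^2)^{-\alpha/2}$. Because $\alpha\in[0,1)$, this coefficient is bounded above by $\mu_0\eps^{-\alpha/2}$ and is nondegenerate, so its contribution can be estimated by interpolation inequalities (Gagliardo--Nirenberg / Sobolev embeddings in dimension $n=2,3$) and absorbed into the leading $\mu_1$-dissipation, at the cost of lower-order terms that are already controlled by the $\h$-absorbing estimate. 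I would use the uniform-Gronwall lemma, combining the differential inequality for $\|\Phi\|_\ve^2$ with the integrated $\h$-bound $\int_t^{t+1}\|\Phi(s)\|_\ve^2\,ds\le C$ (which follows from the first estimate), to conclude the existence of a bounded absorbing set $\bo_1$ in $\ve$.

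With $\bo_1$ in hand, the remainder is routine. Since $\ve$ embeds compactly into $\h$ (the velocity component sits in $\h^2(\Omega)$ and the magnetic component in $\h^1(\Omega)$, both compactly embedded in $\mathbb{L}^2$), the semigroup $\ms(t)$ is uniformly compact for $t$ large: the image $\ms(t)\bo_1$ is relatively compact in $\h$. Combined with the continuity of $\ms(t)$ in $\h$ (which follows from the uniqueness and continuous dependence already asserted for $\alpha\in[0,1)$), the classical theorem (see \cite{Temam-Infinite}, and also \cite{BABIN+VISHIK}, \cite{Robinson}) guarantees that the $\omega$-limit set
\begin{equation*}
\A=\omega(\bo_1)=\bigcap_{s\ge 0}\overline{\bigcup_{t\ge s}\ms(t)\bo_1}
\end{equation*}
is a nonempty, compact, invariant set that attracts every bounded set of $\h$, and is maximal with respect to inclusion among bounded invariant sets. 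I expect the velocity high-regularity estimate to be the genuine difficulty, since the interplay of the quasilinear viscosity $\gu$ with the MHD coupling has no counterpart in \cite{BELLOUT3} or \cite{SERMANGE}; the $\h$-estimate and the final abstract step should be straightforward.
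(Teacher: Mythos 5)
Your proposal is correct and follows essentially the same route as the paper: an absorbing ball in $\h$ from the basic energy estimate, an absorbing set in $\ve$ obtained via the uniform Gronwall lemma fed by the integrated bound $\int_t^{t+r}\left(\lVert\bu(s)\rVert_2^2+\lVert\bb(s)\rVert_1^2\right)ds\le \kappa_0(r)$, and then compactness of the embedding of $\ve$ into $\h$ together with the classical theorem of \cite{Temam-Infinite} applied to the $\omega$-limit set of the absorbing ball. The only detail worth noting is that the paper carries out the $\ve$-estimate in a specific order --- magnetic field first, by testing \eqref{11} with $-\Delta\bb$, then velocity by testing \eqref{10} with $\bu_t$ and rewriting the quasilinear term as the time derivative of the potential $\Sigma(|\mce(\bu)|^2)=\int_0^{|\mce(\bu)|^2}\mu_0(\eps+s)^{-\alpha/2}\,ds$ (one of the two options you mention) --- rather than absorbing that term into the $\mu_1$-dissipation by interpolation.
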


To prove this theorem we start by finding absorbing sets in $\h$ and $\ve$.
\begin{lem}
 There exists a ball $B_\h^{\rho_1}\subset \h$ which absorbs all bounded set of $\h.$
\end{lem}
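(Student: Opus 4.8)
The plan is to derive the basic energy balance for \eqref{I} and convert it into a differential inequality to which Gronwall's lemma applies. First I would test the momentum equation against $\bu$ and the induction equation against $\bb$; concretely, I take $\phi=\bu(t)$ in \eqref{10} and $\psi=\bb(t)$ in \eqref{11} and add the two identities. The time--derivative terms combine into $\tfrac12\frac{d}{dt}\bigl(\lvert\bu\rvert^2+\lvert\bb\rvert^2\bigr)$, the convective terms $(\bu\cdot\nabla\bu,\bu)$ and $\mu(\bu\cdot\nabla\bb,\bb)$ vanish since $\Div\bu=0$ together with the boundary conditions, and the pressure drops out for the same reason. The two viscous contributions become $\mu_1\sum_k\bigl\lvert\frac{\partial\mce(\bu)}{\partial x_k}\bigr\rvert^2$ and $\int_\CQ\gu\lvert\mce(\bu)\rvert^2\,dx\ge0$, while the magnetic diffusion gives $S\lvert\curl\bb\rvert^2$ after integrating by parts and using $\curl\bb\times\bn=0$ on $\partial\CQ$.

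The crucial algebraic step is the cancellation of the coupling terms. Using the identity $\bb\times\curl\bb=\tfrac12\nabla\lvert\bb\rvert^2-\bb\cdot\nabla\bb$ together with $\Div\bu=0$ and $\bu=0$ on $\partial\CQ$, the Lorentz term reduces to $\mu(\bb\times\curl\bb,\bu)=-\mu(\bb\cdot\nabla\bb,\bu)$; a further integration by parts exploiting $\Div\bb=0$ and $\bb\cdot\bn=0$ on $\partial\CQ$ turns this into $\mu(\bb\cdot\nabla\bu,\bb)$, which exactly annihilates the stretching term $-\mu(\bb\cdot\nabla\bu,\bb)$ arising from \eqref{11}. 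I am then left with the energy identity
\begin{equation*}
\frac12\frac{d}{dt}\bigl(\lvert\bu\rvert^2+\lvert\bb\rvert^2\bigr)+\mu_1\sum_k\left\lvert\frac{\partial\mce(\bu)}{\partial x_k}\right\rvert^2+\int_\CQ\gu\lvert\mce(\bu)\rvert^2\,dx+S\lvert\curl\bb\rvert^2=\langle f,\bu\rangle .
\end{equation*}

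Next I would establish coercivity of the dissipation over $\h$. Korn's inequality (the first Lemma, with $p=2$ and $k=1$) bounds $\mu_1\sum_k\bigl\lvert\frac{\partial\mce(\bu)}{\partial x_k}\bigr\rvert^2$ from below by $\mu_1 K(\CQ)^2\lVert\bu\rVert_2^2$, and then \eqref{Poincare} produces a lower bound $c_1\lvert\bu\rvert^2$; the Poincaré inequality \eqref{Poincare} applied to $\bb$ together with $\lVert\bb\rVert_{\ve_2}=\lvert\curl\bb\rvert$ gives $S\lvert\curl\bb\rvert^2\ge c_2\lvert\bb\rvert^2$. Discarding the nonnegative term $\int_\CQ\gu\lvert\mce(\bu)\rvert^2\,dx$ and estimating the forcing by $\langle f,\bu\rangle\le\lVert f\rVert_{\ve^\ast}\lVert\bu\rVert_2\le\tfrac12\mu_1 K(\CQ)^2\lVert\bu\rVert_2^2+C\lVert f\rVert_{\ve^\ast}^2$ (Cauchy--Schwarz and Young), and absorbing the first summand into the Korn term, I reach $\frac{d}{dt}\lvert\Phi\rvert^2+\delta\lvert\Phi\rvert^2\le C\lVert f\rVert_{\ve^\ast}^2$, where $\Phi=(\bu;\bb)$, $\lvert\Phi\rvert^2=\lvert\bu\rvert^2+\lvert\bb\rvert^2$ and $\delta>0$.

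Finally, Gronwall's lemma yields $\lvert\Phi(t)\rvert^2\le\lvert\Phi(0)\rvert^2e^{-\delta t}+\tfrac{C}{\delta}\lVert f\rVert_{\ve^\ast}^2$, so that $\limsup_{t\to\infty}\lvert\Phi(t)\rvert^2\le\rho_0^2:=\tfrac{C}{\delta}\lVert f\rVert_{\ve^\ast}^2$. Fixing any $\rho_1>\rho_0$, for each bounded set $\bo\subset\h$ of radius $R$ there is a time $t_0=t_0(R)$ such that $\ms(t)\bo\subset B_\h^{\rho_1}$ for all $t\ge t_0$; hence $B_\h^{\rho_1}$ is the desired absorbing ball. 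I expect the genuine difficulty to lie not in Gronwall's argument but in the coercivity of the degenerate viscous part: since $\gu=\mu_0(\eps+\lvert\mce(\bu)\rvert^2)^{-\alpha/2}$ may vanish as $\lvert\mce(\bu)\rvert\to\infty$, the dissipation controlling $\bu$ must be supplied \emph{entirely} by the higher--order bipolar term $\mu_1\sum_k\lvert\partial_k\mce(\bu)\rvert^2$ via Korn's inequality, rather than by the $\gu$--term; the coupling cancellation is the other point requiring careful bookkeeping of the boundary integrals.
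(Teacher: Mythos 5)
Your proposal is correct and follows essentially the same route as the paper: test \eqref{10} with $\bu$ and \eqref{11} with $\bb$, cancel the coupling terms via $(\bb\cdot\nabla\bb,\bu)=-(\bb\cdot\nabla\bu,\bb)$, discard the nonnegative $\gu$-term, obtain coercivity from Korn's and Poincar\'e's inequalities, handle the forcing with Young's inequality, and close with Gronwall's lemma to produce the absorbing ball. The only (harmless) variations are that you estimate the force through $\lVert f\rVert_{\ve^\ast}$ and absorb it into the $H^2$ dissipation, whereas the paper uses $\lvert f\rvert\lvert\bu\rvert$ with Poincar\'e, and that you state the uniformity of the absorption time over bounded sets slightly more carefully.
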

\begin{proof}
The set $\mathcal{V}$ is dense in $\h$, so it is permissible to take $\phi=\bu$ in \eqref{10}. From this operation we obtain that
\begin{multline}\label{12}
  \frac{1}{2}\frac{d}{dt}|\bu(t)|^2+\mu_1\left(\frac{\partial \mce(\bu(t))}{\partial x_k}, \frac{\partial \mce(\bu(t))}{\partial x_k}\right)+(\gut\mce(\bu(t)), \mce(\bu(t)))
\\-\mu (\bb(t)\cdot \nabla \bb(t), \bu(t))-(f,\bu(t))=0,
\end{multline}
where we have used the well-known fact that $(\bw \cdot \nabla \bv, \bv)=0$ for any $\bw\in H^1(\Omega)$ and any divergence free function $\bv\in H^1_0(\Omega)$.
Using a similar reasoning we also have that
\begin{equation}\label{13}
 \frac{1}{2}\frac{d}{dt}|\bb(t)|^2-S\left(\Delta \bb(t), \bb(t)\right)-\mu (\bb(t)\cdot \nabla \bu(t), \bb(t))=0.
\end{equation}
Now summing up \eqref{12} and \eqref{13} side by side yields
\begin{multline}\label{14}
 \frac{1}{2}\frac{d}{dt}\left(|\bu(t)|^2+|\bb(t)|^2\right)+\mu_1 \left(\frac{\partial \mce(\bu(t))}{\partial x_k}, \frac{\partial \mce(\bu(t))}{\partial x_k}\right)+
\left(\gut\mce(\bu(t)), \mce(\bu(t))\right)\\-S (\Delta \bb(t), \bb(t))-(f,\bu(t))=0.
\end{multline}
Here the property $(\bb\cdot \nabla \bb, \bu)=-(\bb\cdot \nabla \bu, \bb)$ was used.
 Since $$ \left(\gu\mce(\bu), \mce(\bu)\right)\ge 0, $$
$$  \mu_1 \left(\frac{\partial \mce(\bu)}{\partial x_k}, \frac{\partial \mce(\bu)}{\partial x_k}\right)\ge \mu_1 K(\Omega)||\bu||^2_2,$$
and $$ -S(\Delta \bb, \bb)\ge S ||\bb||^2_1, $$
it follows from \eqref{14} that
\begin{equation}\label{15}
 \frac{1}{2}\frac{d}{dt}\left(|\bu(t)|^2+|\bb(t)|^2\right)+ \mu_1 K(\Omega) ||\bu(t)||^2_2+ S ||\bb(t)||^2_1\le |f||\bu(t)|.
\end{equation}
 Letting $\nu_0=\min(\mu_1K(\Omega), S)$ we can see from \eqref{15} along with Poincar\'e's inequality \eqref{Poincare} and Young's inequality  that
\begin{equation*}
 \frac{1}{2}\frac{d}{dt}\left(|\bu(t)|^2+|\bb(t)|^2\right)+ \nu_0 \lambda_1 \left(|\bu(t)|^2+ |\bb(t)|^2\right)\le 2 \delta^{-1} |f|^2+ \delta 2^{-1} \left(|\bu(t)|^2+|\bb(t)|^2\right),
\end{equation*}
for any $\delta>0$.
 Choosing $\delta=\nu_0$ we derive from the last estimate that
\begin{equation} \label{16}
 \frac{d}{dt}\left(|\bu(t)|^2+|\bb(t)|^2\right)+ \nu_0 \lambda_1 \left(|\bu(t)|^2+ |\bb(t)|^2\right)\le \frac{4}{\nu_0}|f|^2.
\end{equation}
Setting $\nu_1=\frac{4}{\nu_0}$ and $y(t)=|\bu(t)|^2+|\bb(t)|^2$ with $y(0)=|\bu_0|^2+|\bb_0|^2$, the estimate \eqref{16} is equivalent to
\begin{equation*}
 \frac{d y(t)}{dt}+\nu_0 y(t)\le \nu_1 |f|^2,
\end{equation*}
 which together with Gronwall's lemma implies that
\begin{equation*}
\begin{split}
 y(t)\le y(0) e^{-\nu_1 t} +\frac{\nu_1 |f|^2}{\nu_0} (1-e^{-\nu_1 t})\\
\le y(0) e^{-\nu_1 t} +\frac{\nu_1 |f|^2}{\nu_0}.
\end{split}
\end{equation*}
Since $e^{- \nu_1 t}\rightarrow 0$ as $t\rightarrow \infty$, we can find a time $t_0= t_0(\bu_0, \bb_0)$ such that
\begin{equation}\label{11-xx}
 y(t)\le \frac{2 \nu_1|f|^2}{\nu_0},
\end{equation}
for any $t\ge t_0$.  This means that the ball in $\h$ $$ B_\h^{\rho_1}=\{\mathbf{z}\in \h; |\mathbf{z}|^2 \le \rho_1^2\},$$
 with $\rho_1> \left (\frac{2 \nu_1|f|^2}{\nu_0}\right)^{1/2}$ is an absorbing set in $\h$.
\end{proof}
Having found $B_\h^{\rho_1}$ we prove that there is also a ball $B^{\rho_2}_\ve\subset \ve$ which attracts all bounded sets of $\h$. We postpone the proof
for the next lemma. For now we establish an additional estimate that is going to play an important role in the aforementioned claim.
From \eqref{15} we see that
\begin{equation}\label{17}
 \frac{d y(t)}{dt}+ 2\nu_0 \left(||\bu(t)||^2_2+||\bb(t)||^2_1\right)\le 2|f||\bu(t)|,
\end{equation}
for any $t\ge 0$. Integrating \eqref{17} over $[t, t+r]$, with $r>0$ arbitrary, yields
\begin{equation*}
  y(t+r)+2 \nu_0 \int_t^{t+r} \left(||\bus||^2_2+||\bbs||^2_1\right) ds\le y(t)+2 |f|\int_t^{t+r} |\bus|ds.
\end{equation*}
Whenever $t\ge t_0$, we have that
\begin{equation}\label{18}
 y(t+r)+2 \nu_0\int_t^{t+r} \left(||\bus||^2_2+||\bbs||^2_1\right) ds\le \rho_1^2 +2|f| \rho_1 r.
\end{equation}
From \eqref{18} we easily derive that
\begin{equation}\label{19}
 \int_t^{t+r} \left(||\bus||^2_2+||\bbs||^2_1\right) ds\le \kappa_0(r),
\end{equation}
where $$\kappa_0(r)=\frac{1}{2\nu_0}\left(\rho_1^2+ 2|f|\rho_1 r\right).$$
We will now show that there is also a ball $B_\ve^{\rho_2}\subset \ve$ which absorbs all bounded set of $\h$.
\begin{lem}
 The semigroup $\mathbb{S}(t)$ has a bounded absorbing set $B_\ve^{\rho_2}\subset \ve$.
\end{lem}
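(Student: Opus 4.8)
The plan is to upgrade the time-averaged control \eqref{19} to a pointwise-in-time bound on the $\ve$-norm by means of the uniform Gronwall lemma, applied to two higher-order energy identities obtained by testing \eqref{10} and \eqref{11} against suitable multipliers. Because the magnetic field is only controlled in $\h^1$ by the $\ve$-norm, I would treat the magnetic component first and then feed the resulting regularity into the velocity estimate.

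First I would test \eqref{11} with $-\Delta\bb$ (equivalently $\curl\curl\bb$). The boundary conditions $\bb\cdot\bn=\curl\bb\times\bn=0$ make the boundary terms vanish, so $(\partial_t\bb,-\Delta\bb)=\frac12\frac{d}{dt}|\nabla\bb|^2$ while $-S(\Delta\bb,-\Delta\bb)=S|\Delta\bb|^2$, which produces the $\h^2$-dissipation. For $n\le 3$ the embeddings $\h^2\hookrightarrow\mathbb{L}^\infty$, $\h^1\hookrightarrow\mathbb{L}^6$, together with \eqref{MISC-0}, let me bound the two coupling terms by $\frac{S}{2}|\Delta\bb|^2+C\lVert\bu\rVert_2^2|\nabla\bb|^2$. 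This yields the differential inequality
\begin{equation*}
\frac{d}{dt}|\nabla\bb|^2+S|\Delta\bb|^2\le C\lVert\bu\rVert_2^2\,|\nabla\bb|^2.
\end{equation*}
Since \eqref{19} bounds $\int_t^{t+r}\lVert\bus\rVert_2^2\,ds$ and $\int_t^{t+r}|\nabla\bbs|^2\,ds$ uniformly for $t\ge t_0$, the uniform Gronwall lemma gives a bound on $|\nabla\bb(t)|^2$, hence on $\lVert\bb(t)\rVert_1$, for all $t\ge t_0+r$; integrating the same inequality also furnishes a uniform bound $\int_t^{t+r}|\Delta\bbs|^2\,ds\le\kappa_1(r)$.

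Next I would test \eqref{10} with $\phi=\but$. The viscous term is an exact derivative, $\mu_1(\partial_k\mce(\bu),\partial_k\mce(\but))=\frac{\mu_1}{2}\frac{d}{dt}|\partial_k\mce(\bu)|^2$, and the non-Newtonian term has a potential structure: writing $F(\mce(\bu))=\frac{\mu_0}{2-\alpha}(\eps+|\mce(\bu)|^2)^{(2-\alpha)/2}$ one checks that $\partial F/\partial\mce_{ij}=\gu\,\mce_{ij}(\bu)$, so that $(\gut\mce(\bu),\mce(\but))=\frac{d}{dt}\int_\Omega F(\mce(\bu))\,dx$. Setting $E_u(t)=\frac{\mu_1}{2}|\partial_k\mce(\bu)|^2+\int_\Omega F(\mce(\bu))\,dx$, which by Korn's inequality and the subquadratic growth of $F$ (recall $\alpha\in[0,1)$) satisfies $c\lVert\bu\rVert_2^2\le E_u\le C(1+\lVert\bu\rVert_2^2)$, I obtain
\begin{equation*}
\frac{d}{dt}E_u+|\but|^2=(f,\but)-(\bu\cdot\nabla\bu,\but)+\mu(\bb\cdot\nabla\bb,\but).
\end{equation*}
Estimating the right-hand side by H\"older's inequality, the embedding $\h^2\hookrightarrow\mathbb{L}^\infty$ and Young's inequality, I absorb $\frac12|\but|^2$ into the left-hand side and am left with a term of the form $C\lVert\bu\rVert_2^2\,E_u$, a forcing term $C|f|^2$, and the Lorentz contribution $C\lVert\bb\rVert_{\mathbb{L}^\infty}^2|\nabla\bb|^2\le C|\Delta\bb|^2\lVert\bb\rVert_1^2$. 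Using the outcome of the first step (the bound on $\lVert\bb\rVert_1$ and the integrability of $|\Delta\bb|^2$), the Lorentz contribution becomes an integrable forcing term, so the inequality takes the form $\frac{d}{dt}E_u\le g(t)E_u+h(t)$ with $\int_t^{t+r}g$ and $\int_t^{t+r}h$ bounded uniformly for $t\ge t_0+r$. A second application of the uniform Gronwall lemma then bounds $E_u(t)$, hence $\lVert\bu(t)\rVert_2$, for $t\ge t_0+2r$. Combining the two bounds produces a ball $B_\ve^{\rho_2}$, with $\rho_2$ chosen larger than the resulting constant, which absorbs every bounded subset of $\h$.

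The main obstacle is the Lorentz coupling term $\mu(\bb\cdot\nabla\bb,\but)$: in three dimensions the $\ve$-norm only controls $\bb$ in $\h^1\not\hookrightarrow\mathbb{L}^\infty$, so the velocity estimate cannot be closed in isolation. This is precisely why the argument must be organised in two stages, the $-\Delta\bb$ test being used first to gain the $\h^2$-regularity (in time average) of $\bb$ needed to tame the coupling. A secondary technical point is the verification that the potential $F$ is genuinely comparable to $\lVert\bu\rVert_2^2$, which rests on Korn's inequality and on the restriction $\alpha\in[0,1)$.
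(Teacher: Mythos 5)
Your proposal is correct and follows essentially the same route as the paper's own proof: test the induction equation with $-\Delta\bb$ and apply the uniform Gronwall lemma via the time-averaged bound \eqref{19} to get the uniform $\h^1$ bound on $\bb$ and the integrability of $|\Delta\bb|^2$, then test the momentum equation with $\but$, exploit the potential structure of the non-Newtonian term (your $F$ is the paper's $\Sigma$ up to harmless multiplicative and additive constants), and apply the uniform Gronwall lemma a second time to bound $\lVert\bu(t)\rVert_2$. The only cosmetic differences are that you work with $|\nabla\bb|$ instead of $|\curl\bb|$ (equivalent norms on $\ve_2$) and that you verify the third Gronwall hypothesis through the two-sided bound $c\lVert\bu\rVert_2^2\le E_u\le C\left(1+\lVert\bu\rVert_2^2\right)$ rather than through the paper's separate estimates of $\int_t^{t+r}\int_\Omega\Sigma$ and of the $\mu_1$-term.
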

\begin{proof}
The proof of the result will consist of several steps.

\noindent \underline{\texttt{Step 1: Uniform estimate wrt $t$ of $\bb$ in $\ve_2$}}\newline
Taking $\psi=-\Delta\bb(t)$ in \eqref{11} we obtain that
\begin{equation*}
 -\left(\frac{\partial \bb(t)}{\partial t}, \Delta \bb(t)\right)+S |\Delta \bb(t)|^2=\mu\left[(\bu(t)\cdot\nabla \bb(t), \Delta \bb(t))-(\bb(t)\cdot \nabla \bu(t), \Delta\bb(t))\right],
\end{equation*}
  which is equivalent to
\begin{equation}\label{20}
 \frac{1}{2}\frac{d }{dt}|\curl \bb(t)|^2+S |\Delta\bb(t)|^2 = \mu\left[(\bu(t)\cdot\nabla \bb(t), \Delta \bb(t))-(\bb(t)\cdot \nabla \bu(t), \Delta\bb(t))\right].
\end{equation}
Since $|\curl \bb(t) |^2$ is equivalent to  $||\bb(t)||^2_1$ for $\bb(t)\in \ve_1$, we can rewrite \eqref{20} in the following form
\begin{equation}\label{21}
 \frac{1}{2}\frac{d}{dt} ||\bb(t)||^2_1 +S |\Delta\bb(t)|^2 \le c_0 \mu\left[(\bu(t)\cdot\nabla \bb(t), \Delta \bb(t))-(\bb(t)\cdot \nabla \bu(t), \Delta\bb(t))\right].
\end{equation}
Now we want to estimate the right hand side of \eqref{21}. To do so we mainly use that fact that if $\bu\in \ve_1$, then $\bu\in L^q(\Omega)$ for any $2\le q<\infty$ and
$\nabla \bu\in L^s(\Omega)$ with $2\le s\le \frac{2n}{n-2}$ ($2\le s<\infty$ if $n=2$.) For the first term we have
\begin{equation}\label{21-a}
\begin{split}
 \mu (u\cdot \nabla \bb, \bb)&\le \mu c_1 |\bu|_{L^\infty}|\nabla \bb| |\Delta \bb|\\
&\le \delta^{-1} (\mu c_1)^2 ||\bu||^2_2 |\nabla \bb|^2+\delta |\Delta\bb|^2.
\end{split}
\end{equation}
 Owing to \eqref{MISC} and \eqref{21-a} we have that
\begin{equation}\label{21-b}
 \mu(\bu\cdot \nabla \bb, \Delta \bb)\le \delta^{-1} (\mu c_2)^2 ||\bu||^2_2 ||\bb||^2_1+\delta |\Delta \bb|^2.
\end{equation}
By H\"older's inequality we see that the second term verifies
\begin{equation}\label{21-c}
\begin{split}
  \mu(\bb\cdot \nabla \bu, \Delta \bb)&\le \mu c_3 |\bb|_{L^4}|\nabla \bu|_{L^4} |\Delta \bb|\\
&\le \delta^{-1} (\mu c_4)^2 ||\bb||^2_1||\bu||^2_2+\delta |\Delta \bb|^2.
\end{split}
\end{equation}
Let us set $\nu_1=2\delta^{-1}\mu \min(c_4,c_2)$. Owing to \eqref{21-b} and \eqref{21-c} we infer from \eqref{21} that
\begin{equation*}
 \frac{1}{2}\frac{d}{dt} ||\bb(t)||^2_1 +S |\Delta\bb(t)|^2\le \nu_2 ||\bb(t)||^2_1||\bu(t)||^2_2+ 2\delta |\Delta \bb(t)|^2,
\end{equation*}
which with an appropriate choice of $\delta $, let us say $\delta=\frac{S}{4}$, implies that
\begin{equation}\label{22}
 \frac{d}{dt}||\bb(t)||^2_1+ S |\Delta \bb(t)|^2\le \nu_1 ||\bb(t)||^2_1 ||\bu(t)||^2_2.
\end{equation}
Because of \eqref{19} we can use the uniform Gronwall lemma (see, for instance, \cite{Temam-Infinite}) in \eqref{22} to get an uniform estimate wrt $t$ of $\bb$ in $\ve_2$
 which does not explode too fast in time. This estimate reads
\begin{equation*}
 ||\bb(t+r)||^2_1\le \frac{\nu_1\kappa_0(r)}{r}e^{\nu_1\kappa_0(r)},
\end{equation*}
for any $r>0$ and $t\ge t_0$. This implies that for any $r>0$ there exists $\kappa_1(r)=\frac{\nu_1\kappa_0(r)}{r}e^{\nu_1\kappa_0(r)}>0$ such that
\begin{equation}\label{23}
 ||\bb(t)||^2_1\le \kappa_1(r),
\end{equation}
for any $t\ge t_0+r$.

\noindent \underline{\texttt{Step 2: Uniform estimate wrt $t$ of $\Delta \bb$ in $L^2(t, t+r; L^2(\Omega))$}}\newline
For $t\ge t_0+r$ we obtain from \eqref{22} and \eqref{23} that
\begin{equation*}
 ||\bb(t+r)||^2_1+S\int_t^{t+r} |\Delta \bbs|^2 ds\le \nu_1 \kappa_1(r) \int_t^{t+r} ||\bus||^2_2 ds + ||\bb(t)||^2_1,
\end{equation*}
which along with \eqref{19} and \eqref{23} enables us to deduce that
\begin{equation*}
 S\int_t^{t+r} |\Delta \bbs|^2 ds\le \kappa_1\left(\nu_1 \kappa_0(r)+1\right).
\end{equation*}
We easily deduce from this last estimate that for any $r>0$
\begin{equation}\label{25}
 \int_t^{t+r} |\Delta \bbs|^2 ds\le \kappa_2(r),
\end{equation}
for any $t\ge t_0+r$.
This inequality will be very useful for the next step.

\noindent \underline{\texttt{Step 3: Uniform estimate wrt $t$ of $\bu$ in $\ve_1$}}\newline
We want to establish an uniform estimate of $\partial \bu/\partial t$ in $\h$. To shorten notation we will set $\bu_t=\partial \bu/\partial t$. We take $\phi=\but$ in \eqref{10} and we get
\begin{equation}\label{26}
\begin{split}
 |\but(t)|^2+\mu_1 \left(\frac{\partial \mce(\bu(t))}{\partial x_k}, \frac{\partial \mce(\but(t))}{\partial x_k}\right)+\left(\gu\mce(\bu(t)), \mce(\but)\right)
+(\bu(t)\cdot \nabla \bu(t), \but(t))\\
=\mu (\bb(t)\cdot \nabla \bb(t), \but(t))+(f,\but(t)).
\end{split}
\end{equation}
Let $\Sigma$ be the potential defined by
$$\Sigma(\mce)=\int_0^{|\mce|^2} \mu_0 (\eps+s)^{-\alpha/2} ds.$$
it is not difficult to see that
$$ \frac12 \frac{d}{dt}\Sigma (\mce(\bu(t)))=\mu_0 (\eps + |\mce(\bu(t))|^2)^{-\alpha/2}\mce(\bu(t))\mce(\but(t)),$$
 and $$ \left(\frac{\partial \mce(\bu(t))}{\partial x_k}, \frac{\partial \mce(\but(t))}{\partial x_k}\right)
=\frac12 \frac{d}{dt}\left(\frac{\partial \mce(\bu(t))}{\partial x_k}, \frac{\partial \mce(\bu(t))}{\partial x_k}\right)$$
Hence we deduce from \eqref{26} that
\begin{equation}
 \begin{split}
  |\but(t)|^2+\frac{d}{dt}\left(\frac12 \int_\Omega \Sigma(\mce(\bu(t) )dx+\frac{\mu_1}{2} \left(\frac{\partial \mce(\bu(t))}{\partial x_k}, \frac{\partial \mce(\bu(t))}{\partial x_k}\right) \right)\\
\le -(\bu(t)\cdot \nabla \bu(t), \but(t))+\mu (\bb(t)\cdot \nabla \bb(t), \but(t))+ |f||\but(t)|,
 \end{split}\label{27}
\end{equation}
which immediately implies that
\begin{equation}\label{28}
 \begin{split}
  \frac{1}{2}|\but(t)|^2+\frac{d}{dt}\left(\frac12\int_\Omega \Sigma(\mce(\bu(t) )dx+\frac{\mu_1}{2} \left(\frac{\partial \mce(\bu(t))}{\partial x_k}, \frac{\partial \mce(\but(t))}{\partial x_k}\right) dx\right)\\
\le -(\bu(t)\cdot \nabla \bu(t), \but(t))+\mu (\bb(t)\cdot \nabla \bb(t), \but(t))+ \frac{1}{2}|f|^2,\\
\le I_1+I_2+ \frac{1}{2}|f|^2.
 \end{split}
\end{equation}
For $I_1$ we have
\begin{equation}
\begin{split}
 |I_1|&\le c_5 |\bu(t)|_{L^\infty}|\nabla \bu(t)||\but(t)|,\\
&\le c_6 \delta^{-1}||\bu(t)||^2_2|\nabla \bu(t)|^2 + \delta |\but(t)|,\label{28-a}
\end{split}
\end{equation}
for any $\delta>0$.
We can also check that
\begin{equation*}
 \begin{split}
  |I_2|\le \delta^{-1} c_7 |\bb(t)|^2_{L^\infty}|\nabla \bb(t)|^2+\delta |\but(t)|^2,
 \end{split}
\end{equation*}
for any $\delta>0$. Thanks to \eqref{MISC-0},  $||\bb||^2_2$ is equivalent to $|\Delta \bb|^2$ for any $\bb\in \ve_2\cap \ve_3$. Therefore, we can derive from the last estimate and \eqref{MISC} that 
\begin{equation}\label{28-b}
 |I_2|\le \delta^{-1} c_8 |\Delta \bb(t)|^2||\bb(t)||_1^2+\delta |\but(t)|^2.
\end{equation}
By choosing $\delta=1/8$, we can deduce from \eqref{28}-\eqref{28-b} that
\begin{equation}\label{28-c}
 \begin{split}
  \frac{1}{4}|\but(t)|^2 + \frac{dz(t)}{dt}\le 8c_6 ||\bu(t)||_2^2 |\nabla \bu|^2+ 8c_8 |\Delta \bb(t)|^2 ||\bb(t)||^2_1,
 \end{split}
\end{equation}
where we have set $$z(t)= \frac12 \int_\Omega \Sigma(\mce(\bu(t) )dx+\frac{\mu_1}{2} \left(\frac{\partial \mce(\bu(t))}{\partial x_k}, {\partial \mce(\bu(t))}{\partial x_k}\right).$$
Noticing that $$\frac{\mu_1}{2}K(\Omega) \lVert \bu(t)\rVert^2_2 \le \int_\Omega \Sigma(\mce(\bu(t) )dx+\frac{\mu_1}{2} \left(\frac{\partial \mce(\bu(t))}{\partial x_k}, \frac{\partial \mce(\bu(t))}{\partial x_k}\right),$$
 and dropping out the term $1/4 |\but(t)|^2$ we deduce from \eqref{28-c} that
\begin{equation}\label{29}
 \frac{d z(t)}{dt}\le \frac{8 c_6}{2 \mu_1}z(t)+8c_8 |\Delta \bb(t)|^2 ||\bb(t)||^2_1.
\end{equation}
 From here we want to use the Uniform Gronwall lemma, so we need to check that for certain $t^\prime _0$ and  any $r>0$ there exist positive constants $a_1$, $a_2$, $a_3$ such that
\begin{align}
 \frac{8c_6}{\mu_1}\int_t^{t+r} |\nabla \bus|^2 ds\le a_1,\label{29-a}\\
8c_8 \int_t^{t+r} |\Delta \bbs|^2 ds||\bbs||^2_1 ds\le a_2,\label{29-b}\\
\int_t ^{t+r}z(s) ds\le a_3,\label{29-c}
\end{align}
for any $t\ge t^\prime_0$.
Thanks to previous estimate we take $t_0^\prime=t_0+r$ and infer from \eqref{19} that
\begin{equation*}
\begin{split}
 \frac{8c_6}{\mu_1}\int_t^{t+r} |\nabla \bus|^2 ds\le \frac{8c_9}{\mu_1}\int_t^{t+r} ||\bus||^2_2 ds\\
\le \frac{8c_9}{\mu_1} \kappa_0(r).
\end{split}
\end{equation*}
Setting $a_1=\frac{8c_9}{2 \mu_1}\kappa_0(r)$, we get \eqref{29-a}.
Invoking \eqref{23} and \eqref{25} we see that
\begin{equation*}
\begin{split}
 8c_8 \int_t^{t+r} |\Delta \bbs|^2 ||\bbs||_1^2 ds\le 8c_8 \sup_{s\ge t_0+r} ||\bbs||^2_1\int_t^{t+r} |\Delta \bbs|^2 ds, \\
\le 8c_8 \kappa_1(r)\kappa_2(r).
\end{split}
\end{equation*}
Letting $a_2=8c_8 \kappa_1(r)\kappa_2(r)$, we obtain \eqref{29-b}.
Now let us deal with \eqref{29-c}. Thanks to \eqref{14} and \eqref{11-xx} we have that
\begin{equation}\label{30}
\begin{split}
 y(t+r)+2\mu_1 \int_t^{t+r} \left(\frac{\partial \mce(\bus) }{\partial x_k}, \frac{\partial \mce(\bus)}{\partial x_k} \right) ds+2
\int_t^{t+r}\left( \int_\Omega \gu |\mce(\bus)|^2\right)ds\\+2S \int_t^{t+r} ||\bus||^2_1 ds
\le 2 \rho_1 |f|+y(t),
\end{split}
\end{equation}
for any $t\ge t_0+r$. Keeping only the second term of the left hand side of \eqref{30} and using \eqref{11-xx}  we see that
\begin{equation}\label{31}
 \mu_1 \int_t^{t+r} \left(\frac{\partial \mce(\bus) }{\partial x_k}, \frac{\partial \mce(\bus)}{\partial x_k} \right) ds\le {\rho_1}(|f|+\rho_1).
\end{equation}
Now we need to estimate the term involving $\Sigma(\mce(\bu))$. For this purpose we notice that the function $g(s)=\mu_0(\eps+s)^{-\alpha/2}$ is decreasing for $s\in [0,\infty)$. Hence
\begin{equation*}
 \Sigma(\mce(\bu)) \le \int_0^{|\mce(\bu)|^2} (\sup_{s\ge 0} g(s))ds=\frac{\mu_0}{\eps^{\alpha/2} } |\mce(\bu)|^2.
\end{equation*}
 This implies that for any $t\ge t_0+2$
\begin{equation*}
 \int_t^{t+r}\left(\frac12 \int_\Omega \Sigma(\mce(\bus) ) \right)ds\le \frac{\mu_0}{2\eps^{\alpha/2}}\int_t^{t+r} |\mce(\bus)|^2 ds.
\end{equation*}
Invoking Korn's inequality and \eqref{19} we have that
\begin{equation*}
 \int_t^{t+r}\left(\frac12 \int_\Omega \Sigma(\mce(\bus) )\right)ds\le \frac{\mu_0}{2\eps^{\alpha/2}} K(\Omega)\kappa_0(r)r.
\end{equation*}
So putting $a_3={\rho_1}(|f|+\rho_1)+\frac{\mu_0}{2\eps^{\alpha/2}}K(\Omega)\kappa_0(r)r$, we get \eqref{29-c}. Now we can apply the Uniform Gronwall lemma to \eqref{29} and we get
\begin{equation}\label{32}
 z(t+r)\le \left(\frac{a_3}{r}+a_2\right) \exp(a_1),
\end{equation}
for any $t\ge t_0+r$. Korn's inequality along with \eqref{32} implies that for any
\begin{equation}\label{33}
 ||\bu(t)||^2_2\le \frac{1}{\mu_1 K(\Omega)}\left(\frac{a_3}{r}+a_2\right)\exp(a_1),
\end{equation}
for any $t\ge t_0+2r$.

Let us set $$\kappa_3(r)=\kappa_1(r)+\frac{1}{\mu_1 K(\Omega)}\left(\frac{a_3}{r}+a_2\right)\exp(a_1).$$
 So combining \eqref{23} and \eqref{33} we obtain that
\begin{equation}\label{34}
 ||\bu(t)||^2_2+||\bb(t)||^2_1\le \kappa_3(r),
\end{equation}
for any $t\ge t_0+2r$.
The equation \eqref{34} implies that the set
\begin{equation*}
 B_\ve^{\rho_2}=\{\mathbf{z}\in \ve: || \mathbf{z}||^2_{\ve}\le \rho_2^2\},
\end{equation*}
with $\rho_2=\sqrt{\kappa_3(r)}$, is an attracting set in $\ve=\ve_1\times \ve_2$. Here we have set $$ ||\mathbf{z}||^2_{\ve}=||\bu||^2_2+||\bb||^2_1, $$
for any $\mathbf{z}=(\bu;\bb)\in \ve.$
\end{proof}
\begin{proof}[Proof of Theorem \ref{attractor}]
{It is now standard to prove the existence of the global attractor. The set $B_\h^{\rho_1}$ is absorbing in $\h$ and there exists $t_0$ such that
$\mathbb{S}(t)B_\h^{\rho_1}\subset B_\ve^{\rho_2}$, for any $t\ge t_0$. Moreover, $B_\ve^{\rho_2}$ is compact in $\h$, hence $\cup_{t\ge t_0} \mathbb{S}(t) B_\h^{\rho_1}$ is compact and
we can deduce from \cite[Theorem I.1.1]{Temam-Infinite} that $\A=\omega(B_\h^{\rho_1})\quad (=\cap_{t_0\ge 0}\overline{\cup_{t\ge t_0} \mathbb{S}(t) B_\h^{\rho_1}}^{\h})$
is a compact attractor  which is maximal wrt the inclusion in $\h$. This completes the proof of the theorem. }
\end{proof}
Our next concern is to find an estimate of the bounds for $d_H(\A)$ and $d_f(\A)$, the Hausdorff and fractal dimension of $\A$. This procedure needs that $\mathbb{S}(t)$ is Fr\^echet
differentiable wrt to the initial data. Will show this fact in the next section.
\section{Uniform differentiability of $\mathbb{S}(t)$}
Our second result is about the regularity of the semigroup $\{\mathbb{S}(t): t\ge 0\}$ with respect to the initial data of the problem \eqref{I}. We recall the following definition which is
borrowed from \cite{Robinson}.
\begin{Def}\label{Def-Diff}
 We say that $\mathbb{S}(t)$ is uniformly differentiable on $\A$ if for every $(\bu;\bb)\in \A$ there exists a linear operator $\mathfrak{L}(\bu;\bb)$, such that, for all $t\ge 0$;
\begin{equation*}
 \sup_{\{(\bv;\bc),\,\, (\bu;\bb)\in \A:\,\, |\bv-\bu|+|\bc-\bb|< \varkappa \}}\dfrac{\lvert \mathbb{S}(t)(\bv;\bc)-\mathbb{S}(t)(\bu;\bb)-\mathfrak{L}(\bu;\bb)(\bv-\bu;\bc-\bb)\rvert}{\lvert \bv-\bu\rvert
+\lvert \bc-\bb\rvert}\rightarrow 0,
\end{equation*}
  \text{ as } $\varkappa \rightarrow 0$, and
\begin{equation*}
 \sup_{(\bu;\bb)\in\A}\,\, \sup_{(\bv;\bc): |\bv|+|\bc|\neq 0}\dfrac{\lvert \mathfrak{L}(\bu;\bb)(\bv;\bc)\rvert}{\lvert \bv\rvert+\lvert \bc\rvert}<\infty, \text{ for each } t\ge 0.
\end{equation*}
\end{Def}
 The main result of this section is stated in the following
\begin{thm}\label{DIFF-SEM}
 The semigroup $\mathbb{S}(t), t\ge 0$ is uniformly differentiable on $\A$.
\end{thm}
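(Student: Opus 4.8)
The plan is to take $\mathfrak{L}(\bu_0;\bb_0)$ to be the solution operator of the system obtained by formally linearising \eqref{I} along the trajectory $(\bu(t);\bb(t))=\ms(t)(\bu_0;\bb_0)$. Given an increment $(\bw_0;\bm_0)\in\h$, I would let $(\bcu;\bcm):=\mathfrak{L}(\bu_0;\bb_0)(\bw_0;\bm_0)$ be the weak solution of the linear problem whose velocity equation reads
\[
\Big(\frac{\partial \bcu}{\partial t},\phi\Big)+\mu_1\Big(\frac{\partial \mce(\bcu)}{\partial x_k},\frac{\partial \mce(\phi)}{\partial x_k}\Big)+\big(\Gamma^\prime(\bu)\mce(\bcu),\mce(\phi)\big)+(\bcu\cdot\nabla\bu+\bu\cdot\nabla\bcu,\phi)-\mu(\bcm\cdot\nabla\bb+\bb\cdot\nabla\bcm,\phi)=0,
\]
together with the analogous linearisation of \eqref{11} and the datum $(\bcu(0);\bcm(0))=(\bw_0;\bm_0)$. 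Here $\Gamma^\prime(\bu)\mce(\bcu)$ denotes the derivative at $\mce(\bu)$, in the direction $\mce(\bcu)$, of the map $\mce\mapsto\mu_0(\eps+|\mce|^2)^{-\alpha/2}\mce$; since $\eps>0$ and $\alpha\in[0,1)$ this map is smooth with bounded first two derivatives, and a short computation shows that its derivative is monotone, namely $\big(\Gamma^\prime(\bu)\mce(\bw)\big):\mce(\bw)\ge \mu_0(\eps+|\mce(\bu)|^2)^{-\alpha/2-1}\big(\eps+(1-\alpha)|\mce(\bu)|^2\big)|\mce(\bw)|^2\ge 0$. Testing the linearised system with $(\bcu;\bcm)$, using this sign together with the dissipation $\mu_1K(\Omega)\|\bcu\|_2^2+S\|\bcm\|_1^2$, the fact that $(\bu(t);\bb(t))$ stays in the $\ve$-ball $B_\ve^{\rho_2}$ (hence is uniformly bounded in $\h^2\times\h^1$), and the same embeddings and integration by parts as in \eqref{21-a}--\eqref{21-c}, a Gronwall argument yields $|(\bcu(t);\bcm(t))|\le C(t)(|\bw_0|+|\bm_0|)$ with $C(t)$ depending only on $\rho_2$, $t$ and the data. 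This is precisely the uniform boundedness required in the second part of Definition \ref{Def-Diff}.

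Next I would control the genuine difference. Let $(\bv;\bc)=\ms(t)(\bv_0;\bc_0)$ be a second trajectory on $\A$ and put $\bw=\bv-\bu$, $\bm=\bc-\bb$. Subtracting the two copies of \eqref{10}--\eqref{11} and testing with $(\bw;\bm)$ reproves the Lipschitz dependence $|(\bw(t);\bm(t))|\le e^{Lt}(|\bw_0|+|\bm_0|)$ --- this is the uniqueness estimate announced before Theorem \ref{attractor} --- where $L$ depends only on the uniform $\ve$-bound on $\A$; the same computation, after integration in time as in \eqref{17}--\eqref{19}, gives $\int_0^t(\|\bw\|_2^2+\|\bm\|_1^2)\,ds\le C(t)(|\bw_0|+|\bm_0|)^2$. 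These two bounds, combined with the Sobolev embeddings $\ve_1\hookrightarrow \mathbb{L}^\infty$ and $\nabla(\cdot):\ve_1\to\mathbb{L}^4$ already exploited in the previous section, are what close the remainder estimate below.

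The heart of the proof is the error equation. Set $\mathbf{p}=\bw-\bcu$ and $\mathbf{q}=\bm-\bcm$, where $(\bcu;\bcm)=\mathfrak{L}(\bu_0;\bb_0)(\bw_0;\bm_0)$, so that $(\mathbf{p}(0);\mathbf{q}(0))=(0;0)$. Subtracting the linearised system from the difference of the two nonlinear systems, each transport term splits into a part linear in $(\mathbf{p};\mathbf{q})$ (to be absorbed by Gronwall) and a genuinely quadratic part built from $\bw,\bm$, such as $(\bw\cdot\nabla\bw,\cdot)$ and $(\bm\cdot\nabla\bm,\cdot)$; the stress term contributes moreover the second-order Taylor remainder
\[
\mathcal{R}=\Gamma(\bv)\mce(\bv)-\gu\mce(\bu)-\Gamma^\prime(\bu)\mce(\bcu)=\big[\Gamma(\bv)\mce(\bv)-\gu\mce(\bu)-\Gamma^\prime(\bu)\mce(\bw)\big]+\Gamma^\prime(\bu)\mce(\mathbf{p}).
\]
The last summand is linear in $\mathbf{p}$ and monotone, so it is handled exactly as in the boundedness step; the bracketed term is the true quadratic remainder and, because the stress map is $C^2$ with second derivative bounded uniformly (again thanks to $\eps>0$), it is bounded pointwise by $C|\mce(\bw)|^2$. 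Testing the error equation with $(\mathbf{p};\mathbf{q})$ and using the monotonicity of the principal stress part and the dissipation to absorb the linear contributions, I expect a differential inequality
\[
\frac{d}{dt}\big(|\mathbf{p}|^2+|\mathbf{q}|^2\big)\le a(t)\big(|\mathbf{p}|^2+|\mathbf{q}|^2\big)+b(t),
\]
where $a(t)\le C\rho_2^2$ and $b(t)$ collects the quadratic remainders, estimated via the embeddings above by $b(t)\le C\big(\sup_{0\le s\le t}|(\bw(s);\bm(s))|^2\big)\big(\|\bw\|_2^2+\|\bm\|_1^2\big)$. Since $(\mathbf{p}(0);\mathbf{q}(0))=0$, Gronwall's lemma together with the two bounds of the preceding paragraph gives $|(\mathbf{p}(t);\mathbf{q}(t))|^2\le C(t)(|\bw_0|+|\bm_0|)^4$. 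Hence the quotient in Definition \ref{Def-Diff} is $O(|\bw_0|+|\bm_0|)=O(\varkappa)\to 0$, and every constant depends on $(\bu_0;\bb_0)$ only through the uniform $\ve$-bound $\rho_2$, so the convergence is uniform over $\A$.

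The step I expect to be the main obstacle is the control of the quadratic stress remainder $\Gamma(\bv)\mce(\bv)-\gu\mce(\bu)-\Gamma^\prime(\bu)\mce(\bw)$ once it is paired with $\mce(\mathbf{p})$: one must turn the pointwise bound $C|\mce(\bw)|^2$ into a spatial $L^2$-estimate that is honestly quadratic in $(\bw;\bm)$ and still leaves a power of the $\mu_1$-dissipation free for absorption. This is where the $\h^2$-regularity of $\bu$ and $\bv$ on $\A$ and the time-integrated bound for the differences are indispensable, and it is the reason the argument does not reduce to the Newtonian MHD case of \cite{SERMANGE}.
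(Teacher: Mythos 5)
Your proposal has the same architecture as the paper's proof: you define $\mathfrak{L}(\bu_0;\bb_0)$ as the solution operator of the equations linearised along $\ms(t)(\bu_0;\bb_0)$ (your $\Gamma^\prime(\bu)$ is exactly the paper's $\gu\,\mce(\cdot)-\alpha\mathbf{A}_{ijkl}(\bu)\mce(\cdot)$, and your pointwise monotonicity bound is the paper's inequality \eqref{2-16} taken from \cite{BELLOUT3}); you prove Lipschitz continuity of $\ms(t)$ by testing the difference equations with $(\bw;\bm)$, which is the paper's estimate \eqref{36}; and you form the error equation for $(\bw-\bcu;\bm-\bcm)$, splitting the stress contribution into a second-order Taylor remainder quadratic in $\bw$ plus the monotone linearised stress acting on the error --- precisely the paper's $\Sigma_{ijklmn}$-term handled via \cite[pp.~150--151]{BELLOUT3}.

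The genuine difference is the endgame, and there your version is sharper than the paper's. When estimating the quadratic terms such as $(\bw\cdot\nabla\bw,\Theta)$ and the Taylor remainder, the paper majorises one of the two difference factors by the attractor radius $\rho_2$ (see \eqref{42}--\eqref{43}), so the right-hand side of \eqref{52} is only quadratic in $Z=(\bw;\bm)$, Gronwall yields $Y(t)\le C(t)\lvert Z_0\rvert^2$, and the paper's concluding quotient carries the \emph{squared} norm in its numerator; strictly speaking this shows the remainder is $O(\lvert Z_0\rvert)$, not the $o(\lvert Z_0\rvert)$ that Definition \ref{Def-Diff} demands. You instead keep both difference factors, so the source term in your Gronwall inequality is quartic-type in $Z$, and combining $\sup_s\lvert Z(s)\rvert\le C\lvert Z_0\rvert$ with $\int_0^t\lVert Z\rVert_\ve^2\,ds\le C\lvert Z_0\rvert^2$ gives a genuinely superlinear remainder, i.e.\ the definition is literally verified. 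One caveat: your claimed form $b(t)\le C\bigl(\sup_s\lvert Z(s)\rvert^2\bigr)\lVert Z\rVert_\ve^2$ holds for the velocity transport terms (via $\lVert\bw\rVert_1^2\le C\lvert\bw\rvert\,\lVert\bw\rVert_2$), but for the stress remainder paired with $\mce(\Theta)$, and for the magnetic quadratic terms where only $H^1$ dissipation is available, Gagliardo--Nirenberg gives weaker exponents --- e.g.\ in dimension three $\lVert\mce(\bw)\rVert_{L^4}^4\le C\lVert\bw\rVert_2^{7/2}\lvert\bw\rvert^{1/2}$, whose time integral is $O(\lvert Z_0\rvert^{5/2})$ rather than $O(\lvert Z_0\rvert^4)$. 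Since any power strictly greater than $2$ suffices, your conclusion (the quotient tends to zero, uniformly over $\A$ because every constant depends on the data only through $\rho_2$) survives, merely with a smaller exponent than the one you announced.
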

\begin{proof}
We start the proof by showing that $\mathbb{S}(t)$ is Lipschitz continuous. For doing so we consider two weak solutions $(\bu;\bb)$ and $(\bv;\bc)$ associated to the same forcing $f$ and
the initial data $(\bu_0; \bb_0)$ and $(\bv_0;\bc_0)$, respectively. The functions $\bw=\bv-\bu$ and $\bm=\bc-\bb$ satisfy
\begin{multline}
  (\bw_t(t), \phi)+\mu_1\biggl(\frac{\partial \mce(\bw(t))}{\partial x_k}, \frac{\partial \mce(\phi)}{\partial x_k}\biggr)+\biggl(\Gamma(\bv(t))\mce(\bv(t))
-\Gamma(\bu(t))\mce(\bu(t)), \mce(\phi)\biggr)
\\ + (\bv(t)\cdot \nabla \bw(t)+\bw(t)\cdot \nabla \bu(t), \phi)+\mu\biggl[(\bb(t)\cdot\nabla \phi, \bm(t))+(\bm(t)\cdot \nabla \phi, \bc(t)) \biggr]=0,
\end{multline}
and
\begin{multline}
(\bm_t, \psi)-S(\Delta \bm(t), \psi)+\mu\biggl((\bu(t)\cdot \nabla \bm(t)-\bm(t)\cdot \nabla \bu(t)-\bc(t)\cdot\nabla \bw(t)+\bw(t)\cdot\nabla \bc(t), \psi) \biggr)=0,
\end{multline}
where we have set $\bw_t=\dfrac{\partial \bw}{\partial t}$ and $\bm_t=\dfrac{\partial \bm}{\partial t}$.
Taking $\phi=\bw$ and $\psi=\bm$ in the above equations yields
\begin{multline*}
\frac{1}{2}\frac{d}{dt}|\bw(t)|^2+\mu_1\biggl(\frac{\partial \mce\left(\bw(t)\right)}{\partial x_k}, \frac{\partial \mce\left(\bw(t)\right)}{\partial x_k} \biggr)+
\biggl(\Gamma(\bv(t))\mce\left(\bv(t)\right)-\Gamma\left(\bu(t)\right)\mce\left(\bu(t)\right), \mce\left(\bw(t)\right)\biggr)\\-\left(\bw(t)\cdot\nabla \bw(t), \bu(t)\right)
+\mu\biggl[\left(\bb(t)\cdot \nabla \bw(t), \bm(t) \right)+\left(\bm(t)\cdot \nabla \bw(t), \bc(t) \right)\biggr]=0
\end{multline*}
and
\begin{multline*}
 \frac{1}{2}\frac{d}{d t}|\bm(t)|^2+ S ||\bm(t)||^2_1=\mu\biggl(\left(\bm(t)\cdot\nabla \bu(t),
\bm(t)\right)+\left(\bb(t)\cdot \nabla \bw(t), \bm(t)\right)\\+(\bw(t)\cdot \nabla \bm(t), \bc(t))\biggr).
\end{multline*}
Let $\Phi(t)=\lvert\bw(t)\rvert+\lvert\bm(t)\rvert^2$. Summing up side by side and applying Korn's inequality we obtain that
\begin{multline}\label{35}
 \frac{1}{2}\frac{d }{dt}\Phi(t)+\nu_0\left(||\bw(t)||^2_2+||\bm(t)||^2_1\right)\le
 \biggl\lvert \left(\bw(t)\cdot \nabla \bw(t), \bu(t)\right) \biggr\rvert
+\mu\biggl\lvert\left(\bm(t)\cdot \nabla \bw(t), \bc(t)\right)\biggr\rvert\\+\mu\biggl\lvert\left(\bm(t)\cdot \nabla \bu(t), \bm(t)\right)+\left(\bw(t)\cdot \nabla \bm(t), \bc(t)\right)\biggr\rvert
\end{multline}
 To estimate the right hand side of \eqref{35} we will mainly use the fact that if $(\bu_0; \bb_0)$ and $(\bv_0; \bc_0)$ are an element of $\A$ then $\left(\bu(t);\bb(t) \right)$ and
$(\bv(t); \bc(t))$ are element of $B_\ve^{\rho_2}$ for any $t\ge0$. For the first term we have that
\begin{align*}
 \lvert(\bw(t)\cdot \nabla \bw(t), \bu(t) )\rvert\le c_1 \lVert \bu(t)\rVert_2 \lvert \bw(t)\rvert \lvert \nabla \bw(t) \rvert,
\end{align*}
where we have used H\"older's inequality and the embedding $H^2(\Omega)\subset L^\infty(\Omega)$.
By a similar argument we have
\begin{align*}
 \lvert \left(\bm(t)\cdot \nabla \bw(t), \bc(t)\right)+\left(\bw(t)\cdot \nabla \bm(t), \bc(t) \right)\rvert &\le
 c_3 \lvert \bm(t)\rvert \lvert \nabla \bw(t)\rvert_{L^4} \lvert \bc(t)\rvert_{L^4}\\ & \quad \quad +c_1
\lvert \bm(t)\rvert\lvert\nabla \bc(t) \rvert \lvert \bw(t)\rvert_{L^\infty},\\
&\le \tilde{c}_3 \lvert \bm(t)\rvert \lVert \bw(t)\rVert_2 \lVert \bc(t)\rVert_1.
\end{align*}
We can also check that
\begin{align*}
 \lvert\left(\bm(t)\cdot \nabla \bu(t), \bm(t) \right) \rvert&\le c_3 \lvert \bm(t) \rvert \lvert \nabla \bu(t)\rvert_{L^4} \lvert\bm(t)\rvert_{L^4}\\
&\le \bar{c}_3 \lvert \bm(t) \rvert \lVert \bm(t)\rVert_1 \lVert \bu(t)\rVert_2.
\end{align*}
Since $(\bu(t), \bb(t))$ and $(\bv(t), \bc(t))$ are element of $B_\ve^{\rho_2}$, we can find a positive constant $c_{10}$ such that the right hand side of \eqref{35} is bounded from above by
\begin{equation*}
 2\rho_2c_{10}\left(\lvert \bm(t)\rvert \lVert\bw(t)\rVert_2+ \lvert \bm(t) \rvert \lVert\bm(t) \rVert_1  \right).
\end{equation*}
 Hence
\begin{equation*}
 \frac{1}{2}\frac{d }{dt}\Phi(t)+\nu_0\left(||\bw(t)||^2_2+||\bm(t)||^2_1\right)\le 4 \rho_2 c_{10} \biggl(\left( \lvert \bm(t) \rvert +\lvert \bw(t)\rvert \right)
\left(\lVert\bw(t)\rVert_2+\lVert \bm(t)\rVert_1 \right)\biggr),
\end{equation*}
 from which we can infer that
\begin{equation*}
  \frac{d }{dt}\Phi(t)+2\nu_0\left(||\bw(t)||^2_2+||\bm(t)||^2_1\right)\le 2 \varkappa^{-1} (8\rho_2c_{10})^2 \Phi(t)+ \varkappa (\lVert \bw(t) \rVert_2^2+ \lVert \bm(t)\rVert_1^2),
\end{equation*}
for any $\varkappa>0$. By choosing $\varkappa=\nu_0$, we deduce from the last inequality that
\begin{equation*}
  \frac{d }{dt}\Phi(t)+\nu_0\left(||\bw(t)||^2_2+||\bm(t)||^2_1\right)\le 2 \varkappa^{-1} (8\rho_2c_{10})^2 \Phi(t).
\end{equation*}
We put $\eta_1=2 \varkappa^{-1} (8\rho_2c_{10})^2$ and use Gronwall's lemma to deduce that
\begin{equation}\label{36}
 \Phi(t)\le (\lvert\bw_0\rvert^2 +\lvert \bm_0\rvert^2)\exp(\eta_1 t),
\end{equation}
for any $t\ge0$. This show that $\mathbb{S}(t)$ is Lipschitz continuous wrt to the initial data, which also shows the uniqueness of the solution of \eqref{I}.

Let $(\bu; \bb)$ a weak solutions of \eqref{I}.  Using the same argument as in \cite{BELLOUT3} it can be shown that the linearization of \eqref{I} about
$(\bu; \bb)$ is given by the following system of linear PDEs:
\begin{multline}\label{II-a}
\left(\frac{\partial \bcu(t)}{\partial t}, \phi \right)+\left(\gut \mce(\bu(t)) -\alpha \mathbf{A}_{ijkl}(\bu(t))\mce(\bu(t)), \mce(\phi)\right)+\mu_1 (\mce(\bu(t)), \mce(\phi))\\
=\mu\biggl(\left(\bcm(t)\cdot \nabla \bb(t), \phi\right)+\left(\bb(t)\cdot \nabla \bcm(t), \phi\right)\biggr)+(\bcu(t)\cdot \nabla \phi, \bu(t))\\+(\bu(t)\cdot \nabla \phi, \bcu(t)),
\end{multline}
\begin{multline}\label{II-b}
\left(\frac{\partial \bcm(t)}{\partial t}, \psi\right)-S(\Delta \bcm(t), \psi)=-\mu\biggl(\left(\bcu(t)\cdot \nabla \bb(t),\psi\right)
+\left(\bu(t)\cdot \nabla \bcm(t), \psi \right) \\ -\left(\bcm(t)\cdot \nabla \bu(t),\psi\right)-\left(\bb(t)\cdot \nabla \bcu(t), \psi\right)\biggr).
\end{multline}
 Here the tensor $\mathbf{A}_{ijkl}$ is defined by
\begin{equation}\label{2.15}
\mathbf{A}_{ijkl}(\bu)=\mu_0\left(\eps+\lvert\mce(\bu)\rvert^2\right)^{1+ \alpha/2}\mce_{ij}(\bu)\mce_{kl}(\bu),
\end{equation}
and it satisfies
\begin{equation}\label{2-16}
\begin{split}
\int_\Omega \biggl(\gu \lvert\mce(\bcu)\rvert^2-\alpha \mathbf{A}_{ijkl}(\bu) \mce_{ij}(\bcu)\mce_{kl}(\bcu)\biggr)dx\\
\ge 2\eps\alpha \mu_0\int_\Omega \frac{\lvert\mce(\bcu)\rvert^2}{\left( \eps+ \lvert \mce(\bu)\rvert^2 \right)^{1+\alpha/2}} dx
+ 2(1-\alpha)\mu_0\int_\Omega \frac{\mce_{ij}(\bcu) \mce_{kl} (\bcu) \mce_{ij} (\bu) \mce_{kl}(\bu)}{\left(\eps +\lvert\mce(\bu)\rvert^2\right)^{\alpha/2}},
\end{split}
\end{equation}
for all $\eps, \mu_0\ge 0,$ $0\le \alpha <1.$  Note that by standard Galerkin method we can show that \eqref{II-a}-\eqref{II-b}
 has a unique solution $(\bcu;\bcm)\in L^\infty_{loc}(0, \infty; \h)\cap L^2_{loc}(0, \infty; \ve)$ which implies the second condition 
 in Definition \ref{Def-Diff}. Now let $\Theta=\bv-\bu-\bcu$ and $\Psi=\bb-\bc-\bcm$. After some algebra
\begin{multline}\label{III-a}
\left(\frac{\partial \Theta(t) }{\partial t}, \phi\right)+\mu_1 \biggl(\frac{\partial \mce(\Theta(t)) }{\partial x_k}, \frac{\partial \mce(\phi)}{\partial x_k}\biggr)
-\biggl(\gut \mce\left(\bcu(t)\right), \mce(\phi) \biggr)\\ +\alpha \left(\mathbf{A}_{ijkl}\left(\bu(t)\right)\mce(\bcu(t)), \mce(\phi)\right)
+\mu\biggl(\left(\bb(t)\cdot \nabla \phi, \Psi(t)  \right)
+\left(\Psi(t)\cdot \nabla \phi, \bb(t)\right)\biggr)
\\+\left(\bv(t)\cdot\nabla \Theta(t)
+\Theta(t)\cdot \nabla \bv(t), \phi \right)-\left(\bw(t)\cdot \nabla \bw(t), \phi\right)+\left(\bm(t)\cdot \nabla \bm(t), \phi \right)=0,
\end{multline}
and
\begin{multline}\label{III-b}
\left(\frac{\partial \Psi(t)}{\partial t}, \psi\right)-S \left(\Delta \Psi(t), \psi\right)+\mu\biggl( \Theta(t)\cdot \nabla \bc(t)-\bb(t)\cdot \nabla \Theta(t)+\bv(t)\cdot \nabla \Psi(t)
\\-\Psi(t)\cdot \nabla \bv(t)+\bm(t)\cdot \nabla \bw(t)-\bcu(t)\cdot \nabla \bm(t), \psi\biggr)=0.
\end{multline}

Taking $\phi=\Theta$ (resp., $\psi=\Phi$) in \eqref{III-a} (resp., \eqref{III-b}) yields
\begin{equation*}
\begin{split}
\frac{1}{2}\frac{d \lvert\Theta(t)\rvert^2}{dt}+\mu_1\biggl(\frac{\partial \mce(\Theta(t) }{\partial x_k}, \frac{\partial \mce\left(\Theta(t)\right)}{\partial x_k}\biggr)+
\biggl(\Gamma(\bv(t))\mce(\bv(t))-\gut\mce(\bu(t)), \Theta(t)\biggr)\\-\biggl(\Gamma(\bu(t))\mce(\bcu(t)), \mce(\Theta(t))\biggr)
+\alpha \left(\mathbf{A}_{ijkl}(\bu(t)) \mce(\bcu(t)), \mce(\Theta(t))\right)+\left(\Theta\cdot \nabla \bv(t), \Theta(t) \right)\\
=\left(\bw(t)\cdot \nabla \bw(t), \Theta(t) \right)
-\mu\biggl(\left(\bb(t)\cdot \nabla \Theta(t), \Psi(t)\right)+\left(\Psi(t)\cdot \nabla \Theta(t), \bb(t)\right)\\+\left(\bm(t)\cdot\nabla \bm(t), \Theta(t)\right)\biggr),
\end{split}
\end{equation*}
and
\begin{multline}
\frac{1}{2}\frac{d \lvert\Psi(t)\rvert^2}{dt}+S \lVert \Psi(t)\rVert^2_1 +\mu\biggl(\left(\Theta(t)\cdot \nabla \bc(t)-\bb(t)\cdot \nabla \Theta(t), \Psi(t) \right)
-\left(\Psi(t)\cdot \nabla \bv(t), \Psi(t) \right)\\+\left(\bm(t)\cdot \nabla \bw(t)-\bcu(t)\cdot \nabla \bm(t), \Psi(t)\right) \biggr)=0
\end{multline}
By using some results from \cite[Page 150-151]{BELLOUT3} we see that
\begin{multline}\label{37}
\frac{1}{2}\frac{d \lvert\Theta(t)\rvert^2}{dt}+\mu_1K(\Omega)\lVert \Theta(t)\rVert^2_2+\left(\Theta(t)\cdot \nabla \bv(t), \Theta(t)\right)-
\left(\bw(t)\cdot \nabla \bw(t), \Theta(t)\right)\\ \le
-\mu\biggl(\left(\bb(t)\cdot \nabla \Theta(t), \Psi(t)\right)+\left(\Psi(t)\cdot \nabla \Theta(t), \bb(t)\right)+\left(\bm(t)\cdot \bm(t), \Theta(t) \right)\biggr)\\+\int_\Omega \Sigma_{ijklmn}\mce_{ij}(\Theta(t))\mce_{kl}(\bw(t))\mce_{mn}(\bw(t)) dx,
\end{multline}
where
\begin{equation}
\Sigma_{ijklmn}=\int_0^1 \int_0^1 \dfrac{\partial^3 \Sigma}{\partial\mce_{ij}\partial \mce_{kl}\mce_{mn}}\left(\mce(\bu(t)+\sigma \tau \bw(t)) \right)\tau d\tau d\sigma
\end{equation}
We also have
\begin{multline}\label{38}
\frac{1}{2}\frac{d \lvert \Psi(t)\rvert^2}{dt}+S\lVert \Psi(t)\rVert^2_1 \le
 \mu\biggl\lvert \left(\Theta(t)\cdot \nabla \bc(t)-\bb(t)\cdot \nabla \Theta(t), \Psi(t)\right)-\left(\Psi(t)\cdot \nabla \bv(t), \Psi(t)\right)
\\+\left(\bm(t)\cdot \nabla \bw(t)-\bcu(t)\cdot \nabla \bm(t), \Psi(t)\right) \biggr\rvert
\end{multline}
Now we want to estimate each term in the right hand side of \eqref{37}.
We have that
\begin{align*}
 \biggl(\Theta(t)\cdot \nabla \bu(t), \Theta(t) \biggr)& \le c_1 \lvert \Theta(t)\rvert \lvert \nabla \Theta(t)\rvert \lvert \bu(t)\rvert_{L^\infty},\\
&\le C(\Omega) \lvert\Theta(t)\rvert \lVert \Theta(t)\rVert_2 \lVert \bu(t)\rVert_2.
\end{align*}
Owing to Young's inequality and the fact that $\bu(t)\in B_\ve^{\rho_2}$ for any $t\ge0$ and , we obtain that
\begin{equation}\label{39}
 \biggl(\Theta(t)\cdot \nabla \bu(t), \Theta(t) \biggr)\le C(\Omega)^2 \rho_2^2 \varkappa^{-1} \lvert\Theta(t)\rvert^2 +\varkappa \lVert \Theta(t)\rVert_2^2,
\end{equation}
for any $\varkappa>0.$

H\"older's inequality implies that
\begin{equation}
 \biggl \lvert \biggl(\bb(t)\cdot \nabla \Theta(t), \Psi(t)\biggr)+\biggl(\Psi(t)\cdot \nabla \Theta(t), \bb(t) \biggr)\biggr\rvert\le 2 C(\Omega)
\lvert\bb(t)\rvert_{L^4}\lvert \nabla \Theta(t)\rvert_{L^4} \lvert \Psi(t)\rvert.
\end{equation}
Since $H^1(\Omega)\subset L^4(\Omega)$ for $n=2,3$, and $\lVert \psi \rVert_{H^1}$ and $\lvert \curl \psi \rvert$ are equivalent for any $\psi\in \ve_2$, 
we deduce from the last inequality that
\begin{equation}\label{40-0}
 \biggl \lvert \biggl(\bb(t)\cdot \nabla \Theta(t), \Psi(t)\biggr)+\biggl(\Psi(t)\cdot \nabla \Theta(t), \bb(t) \biggr)\biggr\rvert\le 2 C(\Omega) \rVert \bb(t)\rVert_1
\lVert \Theta(t)\rVert_2 \lvert \Psi(t)\rvert.
\end{equation}
 Since $(\bu;\bb)\in B_\ve^{\rho_2}$, we easily see that $\lVert \bb(t)\rVert_1\le \rho_2$. Therefore
\begin{equation}\label{40}
 \mu\biggl \lvert \biggl(\bb(t)\cdot \nabla \Theta(t), \Psi(t)\biggr)+\biggl(\Psi(t)\cdot \nabla \Theta(t), \bb(t) \biggr)\biggr\rvert
\le \varkappa^{-1} \left(2 \mu C(\Omega)\rho_2 \right)^2\lvert\Psi(t)\rvert^2+\varkappa \lVert \Theta(t)\rVert_2^2.
\end{equation}
 It is not difficult to show that
\begin{equation*}
 \mu \biggl\lvert\biggl(\bb(t)\cdot \nabla \bm(t), \Theta(t)\biggr)\biggr\lvert\le \mu C(\Omega)\lVert \bm(t)\rVert_1 \lVert \Theta(t)\rVert_2 \lvert \bm(t)\rvert.
\end{equation*}
 Since $(\bw; \bm)\in B_\ve^{\rho_2}$, we have $\lVert \bm\rVert_1 \le 2 \rho_2$. Thus,
\begin{equation}\label{41}
 \mu \biggl\lvert\biggl(\bb(t)\cdot \nabla \bm(t), \Theta(t)\biggr)\biggr\lvert\le \varkappa^{-1} \left(2\mu \rho_2 C(\Omega)\right)^2\lvert \bm(t)\rvert^2+ \varkappa \lVert \Theta(t)\rVert_2^2.
\end{equation}
We also check that
\begin{align}
 \biggl\lvert \biggl(\bw(t)\cdot \nabla \bw(t), \Theta(t)\biggr)\biggr\rvert&\le C(\Omega) \lvert \bw(t)\rvert \lVert \Theta(t)\rVert_2\rVert \bw(t)\rVert_2,\nonumber\\
&\le \varkappa^{-1} \left(2C(\Omega)\rho_2\right)^2\lvert \bw(t)\rvert^2 +\varkappa \lVert \Theta(t)\rVert_2^2. \label{42}
\end{align}
As far as the term involving $\Sigma_{ijklmn}$ is concerned we have that
\begin{equation}\label{43}
 \biggl\lvert \int_\Omega \Sigma_{ijklmn} \mce_{ij}\left(\Theta(t) \right)\mce_{kl}\left(\bw(t)\right) \mce_{mn}\left(\bw(t)\right)dx \biggr\rvert
\le \varkappa^{-1} \left(2C(\Omega) \rho_2\right)^2\lVert \bw(t)\rVert_2^2 +\varkappa \lVert \Theta(t)\rVert^2_2,
\end{equation}
where we have used again the  fact that  $\lVert \bw(t)\rVert_2\le 2\rho_2$. Putting \eqref{39}-\eqref{43} in \eqref{37} yields
\begin{multline}\label{44}
 \frac{1}{2}\frac{d \lvert \Theta(t) \rvert^2}{dt}+ \mu_1 K(\Omega) \lVert \Theta(t)\rVert_2^2 \le \varkappa^{-1} \left(C(\Omega) \rho_2\right)^2
 \biggl(\lvert \Theta(t)\rvert^2+
 4\mu^2 \lvert\Psi(t)\rvert^2+4\mu^2 \lvert \bm(t)\rvert^2\\+4 \lvert\bw(t)\rvert^2+4\lVert \bw(t)\rVert_2^2
\biggr)+5\varkappa \lVert \Theta(t)\rVert^2_2
\end{multline}
In the next few lines we estimate each term in the right hand side of \eqref{38}.
Owing to the same argument we used for \eqref{40-0} and the fact that $\lvert \nabla \bb\rvert\le \lvert \curl \bb \rvert+\lvert \bb\rvert$ we obtain that
\begin{align*}
 \biggl\lvert \biggl(\Theta(t)\cdot \nabla \bc(t)-\bb(t)\cdot \nabla \Theta(t), \Psi(t) \biggr)\biggr \rvert &\le C(\Omega) \biggl(\lvert \nabla \bc(t)\rvert
\lVert \Theta(t)\rVert_2\lvert \Psi(t)\rvert
\\ &\quad \quad + \lVert \bb(t)\rVert_1 \lVert \Theta(t)\rVert_2 \lvert \Psi(t)\rvert\biggr),\\
& \le C(\Omega)\biggl( \left(\lVert \bc(t)\rVert_1+ \lvert\bc(t) \rvert \right)\lVert \Theta(t)\rVert_2\lvert \Psi(t)\rvert
 \\ & \quad \quad + \lVert \bb(t)\rVert_1 \lVert \Theta(t)\rVert_2 \lvert \Psi(t)\rvert \biggr).
\end{align*}
 Since $(\bu;\bb)$ and $(\bv;\bc)$ belong to $B_\ve^{\rho_2}$ we have that
\begin{align}
 \biggl\lvert \biggl(\Theta(t)\cdot \nabla \bc(t)-\bb(t)\cdot \nabla \Theta(t), \Psi(t) \biggr)\biggr \rvert & \le 3 \mu \rho_2 C(\Omega) \lvert\Psi(t)\rvert \lVert \Theta(t)\rVert_2\nonumber \\
&\le \varkappa^{-1} \left(3 \mu \rho_2 C(\Omega)\right)^2 \lvert\Psi(t)\rvert^2+\varkappa \lVert \Theta(t)\rVert^2_2.\label{45}
\end{align}
Using the similar idea as used for \eqref{40-0} and \eqref{45} we see that
\begin{align}
 \mu \biggl\lvert \left(\Psi(t)\cdot \nabla \bu(t), \Psi(t) \right) \biggr\rvert&\le\mu C(\Omega) \lvert \Psi(t)\rvert_{L^4} \lvert\nabla \bu(t)\rvert_{L^4}\lvert \Psi(t)\rvert,\nonumber\\
&\le \mu C(\Omega) \lVert \Psi(t)\rVert_1 \lVert \bu(t)\rVert_2 \lvert \Psi(t)\rvert,\nonumber\\
&\le \gamma^{-1} \left(\mu\rho_2 C(\Omega)\right)^2 \lvert \Psi(t)\rvert^2+\gamma \lVert \Psi(t)\rVert^2_1.\label{46}
\end{align}
Before we proceed further we recall that $\bcu=\bw-\Theta$. Hence
\begin{equation*}
 \biggl(\bm(t)\cdot \nabla \bw(t)-\bcu(t) \cdot \nabla \bm(t), \Psi(t) \biggr)=\biggl(\bm(t)\cdot \nabla \bw(t)-\bw(t)\cdot \nabla \bm(t)+\Theta(t) \cdot \nabla \bm(t), \Psi(t)\biggr).
\end{equation*}
As in \eqref{46} we can check that
\begin{equation}\label{47}
 \mu \biggl\lvert \biggl(\bm(t)\cdot \nabla \bw(t), \Psi(t) \biggr)\biggr\rvert\le \gamma^{-1} \left(2\mu\rho_2 C(\Omega)\right)^2\lvert \bm(t)\rvert^2+\gamma \lVert \Psi(t)\rVert_1^2.
\end{equation}
We also have that
\begin{align*}
 \biggl\lvert \biggl(\bw(t)\cdot \nabla \bm(t), \Psi(t)\biggr)\biggr\rvert& \le C(\Omega) \lvert \bw(t)\rvert_{L^\infty} \lvert \nabla \Psi(t)\rvert \lvert \bm(t)\rvert,\\
&\le C(\Omega) \biggl(\lVert \bw(t) \rVert_2 \lVert \Psi(t)\rVert_1 \lvert\bm(t)\rvert+ \lVert\bw(t)\rVert_2\lvert \bm(t)\rvert\lvert \Psi(t)\rvert \biggr).
\end{align*}
 If $(\bu_0;\bb_0)\in \A$ and $(\bv_0;\bc_0)\in \A$, then $(\bw(t);\bm(t))\in B_\ve^{2\rho_2}$. Hence we derive from the last estimate that
\begin{equation}
 \mu \biggl\lvert \biggl(\bw(t)\cdot \nabla \bm(t), \Psi(t)\biggr)\biggr\rvert \le \gamma^{-1} \left(2\mu\rho_2 C(\Omega) \right)^2 \lvert \bm(t)\rvert^2
+\gamma \lVert \Psi(t)\rVert_1^2. \label{48}
\end{equation}
Finally,
\begin{equation*}
 \biggl\lvert \biggl(\Theta(t)\cdot \nabla \bm(t), \Psi(t)\biggr)\biggr\rvert\le C(\Omega) \lVert \Theta(t)\rVert_2\lvert\Psi(t)\rvert \left(\lVert \bm(t)\rVert_1+\lvert \bm(t)\rvert\right).
\end{equation*}
Since $(\bw(t);\bm(t))\in B_\ve^{2\rho_2}$, we infer from the last estimate that
\begin{align}
 \mu \biggl\lvert \biggl(\Theta(t)\cdot \nabla \bm(t), \Psi(t)\biggr)\biggr\rvert & \le 2\mu \rho_2 C(\Omega) \lVert \Theta(t)\rVert_2 \lvert \Psi(t)\rvert,\nonumber\\
&\le \varkappa^{-1} \left(2\mu \rho_2 C(\Omega) \right)\lvert \Psi(t)\rvert^2+\varkappa \lVert \Theta(t)\rVert_2^2. \label{49}
\end{align}
Using \eqref{45}-\eqref{49} in \eqref{38} we find that
\begin{multline}\label{50}
 \frac{1}{2}\frac{d \lvert \Psi(t)\rvert^2}{dt}+S \lVert \Psi(t)\rVert_1^2\le 2 \varkappa \lVert \Theta(t)\rVert^2_2+3\gamma \lVert \Psi(t)\rVert_1^2
+ \left(\mu\rho_2 C(\Omega) \right)^2\left(13\varkappa^{-1} + \gamma^{-1}\right)\lvert \Psi(t)\rvert^2\\+ 2\gamma^{-1} \left(2\mu\rho_2 C(\Omega) \right)^2\lvert \bm(t)\rvert^2.
\end{multline}
Letting $Y(t)=\lvert \Theta(t)\rvert^2+\lvert \Psi(t)\rvert^2$ and summing up \eqref{44} and \eqref{50} side by side implies that
\begin{equation}\label{51}
\begin{split}
 \frac{1}{2}\frac{d Y(t)}{ dt}+\mu_1 \lVert \Theta(t)\rVert_2^2+S\lVert \Psi(t)\rVert_1^2\le 
 \varkappa^{-1} \left(\rho_2C(\Omega)\right)^2 \lVert \bw(t)\rVert^2_2
+7 \varkappa \lVert \Theta(t)\rVert^2_2 + 3\gamma\lVert\Psi(t)\rVert_1^2\\ +
\left(\mu \rho_2 C(\Omega) \right)^2\left(17\varkappa^{-1}+\gamma^{-1} \right)\lvert \Psi(t)\rvert^2+ 
\varkappa^{-1} \left(C(\Omega)\right)^2 \lvert \Theta(t)\rvert^2
\\+ \varkappa^{-1} \left(2\rho_2 C(\Omega) \right)^2 \lvert \bw(t)\rvert^2+ \left(2\mu \rho_2 C(\Omega) \right)^2\left(\varkappa^{-1}+2\gamma^{-1} \right)\lvert \bm(t)\rvert^2.
\end{split}
 \end{equation}
Choosing $\varkappa=\dfrac{\mu_1}{14}$ and $\gamma=\dfrac{S}{6}$. We deduce from \eqref{51} that there exist positive constants $\tilde{C}_i, i=1,\dots, 5,$ depending only on
$\Omega, \mu, S, \mu_1, \rho_2$ such that
\begin{equation}
 \begin{split}
 \frac{1}{2}\frac{d Y(t)}{ dt}+\frac{\mu_1}{2} \lVert \Theta(t)\rVert_2^2+\frac{S}{2} \lVert \Psi(t)\rVert_1^2\le \tilde{C}_1 \lvert \Theta(t)\rvert^2
+ \tilde{C}_2\lvert \Psi(t)\rvert^2 +\tilde{C}_3 \lvert \bw(t)\rvert^2 + \tilde{C}_4 \lVert \bw(t)\rVert^2_2\\ +\tilde{C}_5 \lvert \bm(t)\rvert^2. \label{52}
 \end{split}
\end{equation}
 Let us set $ \chi_1=2 \max(\tilde{C}_1, \tilde{C}_2),$ $ \chi_2=2\max(\tilde{C}_3, \tilde{C}_4),$ and $\chi_3=2 \max(\tilde{C}_5,1)$. We infer from \eqref{52} that
\begin{equation*}
 \frac{d Y(t)}{dt} +\mu_1 \lVert \Theta\rVert^2_2+S \lVert \Psi(t)\rVert^2_1\le \chi_1 Y(t)+ \chi_2 \lvert Z(t)\rvert^2+\chi_3 \lVert Z(t)\rVert^2_{\ve},
\end{equation*}
where
\begin{align*}
 \lvert Z(t)\rvert^2=\lvert\bw(t)\rvert^2 +\lvert \bm(t)\rvert^2,\\
\lVert Z(t)\rVert^2_\ve=\lVert \bw(t)\rVert^2_2+\lVert \bm(t)\rVert_1^2,\\
|Z_0|^2=|\bw_0|^2+|\bm_0|^2.
\end{align*}
Dropping out the second term in the left hand side of \eqref{52} and invoking Gronwall's lemma yield
\begin{equation*}
 Y(t)\le \chi_4 e^{\chi_1 t}\int_0^t \left(\lvert Z(s)\rvert^2+\lVert Z(s)\rVert^2_{\ve}  \right)ds, \forall t\ge0,
\end{equation*}
where $\chi_4=\max(\chi_2, \chi_3).$  From \eqref{36} we have
\begin{equation*}
 \lvert Z(t)\rvert^2\le |Z_0|^2 e^{\eta_1 t},
\end{equation*}
 and
\begin{align*}
 \nu_0\int_0^t \lVert Z(s)\rVert^2_{\ve}ds & \le \eta_1 \int_0^t \lvert Z(s) \rvert^2 ds,\\
&\le |Z_0|^2 \left(e^{\eta_1 t}-1\right),\\
&\le |Z_0|^2 e^{\eta_1 t}.
\end{align*}
Therefore
\begin{align*}
 Y(t)\le  \chi_4 e^{\left( \eta_1+ \chi_1 \right)t} |Z_0|^2\left(\frac{1}{\eta_1}+\frac{1}{\nu_0}\right).
\end{align*}
And this estimate implies that for any
\begin{equation*}
\dfrac{\lvert \bw(t)-\bcu(t) \rvert^2 + \lvert \bm(t)-\bcm(t)\rvert^2 }{\lvert \bu_0-\bv_0\rvert+\lvert\bb_0-\bc_0\rvert }\rightarrow 0,
\end{equation*}
for any $t\ge 0$ as $\lvert \bu_0-\bv_0\rvert\rightarrow 0$ and $\lvert \bb_0-\bc_0\rvert\rightarrow 0$. This means that the semigroup $\mathbb{S}(t):\h\rightarrow \h$ is uniformly
 differentiable wrt to the initial data.
\end{proof}
\section{Bounds for the fractal $d_f(\A)$ and Hausdorff $d_H(\A)$ dimensions of $\A$}
 The fractal dimension of $\A$ is written as $d_f(\A)$. It is basically based on the number of closed balls of a fixed radius $\delta$ needed to cover $\A$ (see, for instance,
\cite[Section 13.1.1]{Robinson}). We denote by $N(\A, \delta)$ the minimum number of balls in such a cover. We recall the following definition for sake of precision.
\begin{Def}[See, for e.g., \cite{Robinson}]
 If the closure of $\A$ is compact, then the fractal dimension of $\A$ is given by
\begin{equation*}
 d_f(\A)={\lim\sup}_{\delta\rightarrow 0}\dfrac{\log N(\A,\delta)}{\log(1/\delta)},
\end{equation*}
where we allow the limit in the above equation to take the value $+\infty$.
\end{Def}

Now we recall the definition of the Hausdorff dimension of $\A$. It is based on the approximation of the $d-$dimensional volume of $\A$ by a covering of finite number of balls
$B(x_i,r_i)$ with radii $r_i\le \delta.$ Let $$ \mu(\A, d, \delta)=\inf\biggl\{\sum_{i}r_i^d: r_i\le \delta \text{ and } \A\subset \cup_{i}B(x_i,r_i) \biggr\}.$$
\begin{Def}[See, for e.g., \cite{Robinson}]
 The Hausdorff dimension of $\A$ is given by
\begin{equation*}
 d_H(\A)=\inf_{d>0}\{d:\lim_{\delta\rightarrow 0}\mu(\A, d, \delta)=0 \}.
\end{equation*}
\end{Def}
We have the following relation between fractal and Hausdorff dimensions.
\begin{thm}[See, for e.g., \cite{Robinson}]
$d_H(\A)\le d_f(\A).$
\end{thm}

To calculate the bounds for the fractal dimension $d_f(\A)$ of $\A$ we will mainly follow the scheme in \cite[Section 13.2]{Robinson}. For this purpose we let $\biggl\{\Phi_i=\begin{pmatrix}
          \phi_i\\ \psi_i
         \end{pmatrix}: i=1,2,\dots\biggr\}$ be an orthonormal basis of $\h$ and $P_m$ be an orthogonal projection from $\h$ onto $$\Span \biggl\{\Phi_i: i=1,2,\dots,m\biggr\}.$$
For a solution $(\bu;\bb)$ of \eqref{I} we denote by $\mathcal{L}(\bu;\bb)$ the mapping defined by
\begin{equation*}
\frac{d}{dt}\begin{pmatrix}
\bcu(t)\\
\bcm(t)
\end{pmatrix}=\fd\cdot \begin{pmatrix}
\bcu(t)\\
\bcm(t).
\end{pmatrix}
\end{equation*}
Following the argument in \cite[chapter V]{Temam-Infinite} we denote by $\{\Lambda_i; i \in \mathbb{N}\}$ the set of Lyapunov exponents of $\h$ and we set
$$ \tilde{q}_m=\lim_{t\rightarrow \infty}\sup_{(\bu_0;\bb_0)\in \A}\frac{1}{t}\int_0^t -\tr(\fd\circ P_m)(s) ds.$$
It is a standard fact (see, for instance, \cite[Chapter V]{Temam-Infinite}) that
\begin{equation*}
\Lambda_1+\dots +\Lambda_m\le -\tilde{q}_m.
\end{equation*}
 Therefore, it follows from \cite[Theorem 13.16]{Robinson} (See also, \cite[Subsection V.3.4]{Temam-Infinite}) that
\begin{thm}
 If $-\tilde{q}_m< 0$, then $m$ is the smallest positive integer such that $d_f(\A)\le m.$
\end{thm}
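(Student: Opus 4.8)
The plan is to obtain this statement as a direct consequence of the abstract volume-contraction criterion of Constantin--Foias--Temam, in the form recorded in \cite[Theorem 13.16]{Robinson} (see also \cite[Subsection V.3.4]{Temam-Infinite}). The three ingredients that the abstract theorem requires have already been assembled: the attractor $\A$ is compact and invariant (Section 3), the semigroup $\ms(t)$ is uniformly differentiable on $\A$ with Fr\'echet derivative given by the solution operator of the linearized system generated by $\fd$ (Section 4), and the global Lyapunov exponents $\{\Lambda_i\}$ are controlled by the uniform trace quantities through the stated inequality $\Lambda_1+\dots+\Lambda_m\le -\tilde q_m$. Thus the bulk of the argument is to verify that the hypothesis $-\tilde q_m<0$ feeds correctly into this machinery and then to invoke it.

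First I would read the hypothesis $-\tilde q_m<0$ through the definition $\tilde q_m=\lim_{t\to\infty}\sup_{(\bu_0;\bb_0)\in\A}\frac1t\int_0^t-\tr(\fd\circ P_m)(s)\,ds$: the condition $-\tilde q_m<0$ means precisely that, uniformly over initial data on $\A$, the time-averaged trace $\tr(\fd\circ P_m)$ is eventually negative and bounded away from $0$. Geometrically this says that every $m$-dimensional volume element carried by the linearized flow along trajectories in $\A$ is contracted, uniformly and exponentially, to zero. Combined with $\Lambda_1+\dots+\Lambda_m\le -\tilde q_m<0$, we are exactly in the situation covered by the abstract theorem.

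I would then apply \cite[Theorem 13.16]{Robinson}: once the sum of the first $m$ global Lyapunov exponents is negative --- equivalently, once the uniform $m$-dimensional volume contraction holds --- the fractal dimension satisfies $d_f(\A)\le m$, and $m$ is characterized as the least integer for which this volume-contraction criterion is met, so that it furnishes the sharpest such upper bound; together with the previously recorded inequality $d_H(\A)\le d_f(\A)$ this bounds both dimensions by $m$. The finiteness and uniform control of the relevant traces needed to make the limsup defining $\tilde q_m$ meaningful follow from the uniform differentiability of Section 4 together with the coercivity furnished by the bipolar term $\mu_1$ and the magnetic diffusion $S$.

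The main obstacle is not the final invocation but the justification that the abstract framework genuinely applies to the present non-Newtonian MHD system: one must check that $\tr(\fd\circ P_m)$ is finite and that the leading negative contribution to it really comes from the dissipative operators, so that $-\tilde q_m$ does become negative for $m$ large. Here the crucial point is that the monotone extra-stress term $\gu\mce(\bu)$, through the convexity inequality \eqref{2-16}, contributes nonnegatively to $-\tr(\fd\circ P_m)$ and hence never spoils the contraction, while the fourth-order bipolar dissipation $\mu_1$ and the magnetic diffusion $S$ supply a trace lower bound of the standard Constantin--Foias--Temam type. Making this quantitative --- that is, exhibiting an explicit $m$ with $-\tilde q_m<0$ and thereby an explicit dimension bound --- is the computation I would carry out next.
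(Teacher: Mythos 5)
Your proposal is correct and takes essentially the same route as the paper: the paper offers no independent proof of this statement, but derives it exactly as you do, by combining the compact invariant attractor of Section 3, the uniform differentiability of $\ms(t)$ on $\A$ from Section 4, and the standard inequality $\Lambda_1+\dots+\Lambda_m\le -\tilde{q}_m$, and then invoking \cite[Theorem 13.16]{Robinson} (equivalently \cite[Subsection V.3.4]{Temam-Infinite}). The quantitative work you flag at the end --- exhibiting an explicit $m$ with $-\tilde{q}_m<0$ via the trace estimates on $\fd\circ P_m$ --- is precisely what the remainder of the paper's Section 5 carries out, but it belongs to the proof of the explicit dimension bound (Theorem \ref{BOUNDS}), not of this abstract statement.
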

The value of $\fd$ at $(\phi_i, \psi_i)^T$ is defined by $\fd(\phi_i, \psi_i)^T=(X;Y)$, where
\begin{multline*}
 (X,\phi)=-\mu_1\biggl(\frac{\partial \mce(\phi_i)}{\partial x_k},\frac{\partial \mce(\phi)}{\partial x_k} \biggl)
-\biggl(\gu\mce(\phi_i)-\alpha \mathbf{A}_{ijkl}\left(\bu(t)\right)\mce(\phi_i), \mce(\phi)\biggr)\\+ \biggl(\bu(t)\cdot \nabla \hp,\phi \biggr)+\biggl(\hp\cdot \nabla \bu(t), \phi\biggr)
+\mu\biggl(\Sp\cdot \nabla \phi
+\bb(t)\cdot \nabla \phi, \Sp \biggr),
\end{multline*}
and
\begin{multline*}
(Y,\psi)=S(\Delta \Sp, \psi)- \mu\biggl( \biggl(\hp\cdot \nabla \bb(t), \psi\biggr)+\biggl(\bu(t)\cdot \nabla \Sp, \psi\biggr)
-\biggl(\Sp\cdot \nabla \bu(t), \psi\biggr)\\-\biggl(\bb(t)\cdot \nabla \hp, \psi\biggr) \biggr).
\end{multline*}
for any $(\phi;\psi)\in \ve$.
Therefore $$ \biggl(\fd(\hp; \Sp)^T, (\hp; \Sp)\biggr)=(X,\hp)+(Y, \Sp),$$ which is equivalent to
\begin{equation*}
\begin{split}
 \biggl(\fd(\hp; \Sp)^T, (\hp; \Sp)\biggr)=-\mu_1 \biggl(\frac{\partial \mce(\hp)}{\partial x_k}, \frac{\partial \mce(\hp)}{\partial x_k}\biggr)
 -\biggl(\gu \mce(\hp)-\alpha \mathbf{A}_{ijkl}\mce(\hp), \mce(\hp)\biggr)\\ -S \lvert \nabla \Sp\rvert^2  +\biggl(\hp\cdot\nabla \bu(t), \hp\biggr)- \mu \biggl( \biggl(\Sp\cdot \nabla \hp, \bb(t)\biggr)+
\biggl(\hp\cdot \nabla \bb(t), \Sp \biggr)\\-\biggl(\Sp\cdot \nabla \bu(t), \Sp\biggr) \biggr).
\end{split}
\end{equation*}
Let us recall \cite[Equation (2.16)]{BELLOUT3}
\begin{multline*}
 I_0=\int_\Omega \biggl(\gut \lvert \mce(\hp)\rvert^2-\alpha \mathbf{A}_{ijkl}\left(\bu(t)\right) \lvert \mce(\hp)\rvert^2\biggr)dx
\ge 2\eps \mu_0 \int_\Omega \frac{\lvert \mce(\hp)\rvert^2}{\left(\eps+ \lvert \mce\left(\bu(t)\right)\rvert^2 \right)^{1+\alpha/2}}dx
\\ + 2(1-\alpha)\mu_0\int_\Omega \frac{\lvert \mce(\hp)\rvert^2}{\left(\eps + \lvert \mce\left(\bu(t)\right)\rvert^2\right)^{\alpha/2}} dx
\end{multline*}
 Let us set $I_1$ (resp., $I_2$) the first (resp., second) term in the above inequality.
Both of $I_1$ and $I_2$ are positive, so we have
\begin{multline*}
 -\biggl(\fd(\hp; \Sp)^T, (\hp; \Sp)\biggr)\ge \mu_1 \biggl(\frac{\partial \mce(\hp) }{\partial x_k}, \frac{\partial \mce(\hp) }{\partial x_k}\biggr)+S \lvert \nabla \Sp\rvert^2+2(1-\alpha) I_2\\
-\biggl(\hp\cdot \nabla \bu(t), \hp \biggr)-\mu\biggl(\biggl(\Sp\cdot \nabla \hp, \bb(t) \biggr)+ \biggl( \hp\cdot \nabla \bb(t), \Sp\biggr)\\-\biggl(\Sp\cdot \nabla \bu(t), \Sp
\biggr)\biggr).
\end{multline*}
Since $0\le \alpha <1$, we easily see that $p=2-\alpha\in (1,2]$. We can check that
\begin{equation*}
 \left(2(1-\alpha)\right)^{-1} I_2\ge \biggl(\int_\Omega \left(\eps +\lvert \mce\left(\bu(t)\right)\rvert^2\right)^{p/2}dx\biggr)^{\frac{p-2}{p}}\biggl(\int_\Omega \lvert\mce(\hp)\rvert^p dx \biggr)^{2/p}.
\end{equation*}
from which along with Korn's inequality we derive that
\begin{equation*}
 \left(2(1-\alpha)\right)^{-1} I_2\ge  \biggl(\int_\Omega \left(\eps +\lvert \mce\left(\bu(t)\right)\rvert^2\right)^{p/2}dx\biggr)^{\frac{p-2}{p}} K(\Omega) \lVert \hp\rVert^2_{W^{1,p}}.
\end{equation*}
For $0\le \alpha <1$ we have that
\begin{equation}
 \int_\Omega \left(\eps + \lvert \mce\left(\bu(t)\right) \rvert^2 \right)^{p/2}dx\le C(\Omega,\alpha, \eps)+ \int_\Omega \lvert \mce\left(\bu(t)\right)\rvert^p dx.
\end{equation}
As $1< p\le 2$ we also see that
\begin{equation}
 \int_\Omega \left(\eps + \lvert \mce\left(\bu(t)\right) \rvert^2 \right)^{p/2}dx \le C(\Omega, \alpha, \eps)+ C(\Omega)\biggl(\int_\Omega \lvert \mce\left(\bu(t)\right) \rvert^2 dx \biggr)^{p/2}
\end{equation}
Since $(\bu; \bb)$ is a solution of \eqref{I}, $\bu$ is an element of $L^2(0,\infty; \ve_1)$. Therefore
\begin{equation*}
 \int_\Omega \left(\eps + \lvert \mce\left(\bu(t)\right)\rvert^2 \right)^{p/2}dx\le C(\Omega, \alpha, \eps)+ C(\Omega,\alpha),
\end{equation*}
   for almost everywhere $t\ge 0$. Thanks to the fact that $p-2\ge 0$, we see that
\begin{equation*}
 \left(2(1-\alpha)\right)^{-1} I_2\ge \left(\tilde{C}(\Omega,\alpha,\eps)\right)^{\frac{p-2}{p}}K(\Omega) \lVert \hp\rVert^2_{W^{1,p}},
\end{equation*}
 where $\tilde{C}(\Omega,\alpha,\eps)=C(\Omega, \alpha, \eps)+C(\Omega, \alpha)$. Hence
\begin{equation*}
\begin{split}
 -\tr(\fd\circ P_m)=-\sum_{i=1}^m \biggl( \fd \Phi_i, \Phi_i \biggr)\ge \mu_1 \sum_{i=1}^m \biggl(\frac{\partial \mce(\hp) }{\partial x_k},\frac{\partial \mce(\hp) }{\partial x_k}\biggr)
+S \sum_{i=1}^m \lvert \nabla \Sp\rvert^2\\+ \tilde{K}(\Omega)\sum_{i=1}^m \lVert \hp\rVert^2_{W^{1,p}}- \sum_{i=1}^m F(\bu,\bb,\hp,\Sp),
\end{split}
\end{equation*}
where $\tilde{K}(\Omega)=2(1-\alpha) \biggl(\tilde{C}(\Omega, \alpha,\eps) \biggr)^{\frac{p-2}{p}}$ and
$$ F(\bu,\bb, \hp, \Sp)=(\hp\cdot \nabla \bu(t), \hp)+\mu\biggl( \left(\Sp\cdot \nabla \hp, \bb(t) \right)+ \left( \hp\cdot \nabla \bb(t), \Sp \right)+ \left(\Sp\cdot \nabla \Sp, \bu(t)
\right) \biggr).$$
From H\"older's inequality and Young's inequality we derive that
\begin{align*}
 \left(\hp\cdot \nabla  \bu(t) , \hp\right)&\le C(\Omega)\lvert \bu(t)\rvert_{L^\infty} |\hp| ||\hp||_{H^1},\\
&\le \varkappa^{-1}_1 C(\Omega)^2 ||\bu(t)||^2_{2}+ \varkappa_1 |\hp|^2||\hp||^2_{H^1},
\end{align*}
for any $\varkappa_1>0.$
Since $|\hp|^2+|\Sp|^2=1$ we obtain from the last estimate that
\begin{equation*}
 \left(\hp\cdot \nabla  \bu(t) , \hp\right)\le \varkappa_1^{-1} C(\Omega)^2 ||\bu(t)||^2_2+ \varkappa_1 ||\hp||^2_{H^1}.
\end{equation*}
By a similar argument we have that
\begin{align*}
 \mu\biggl(\left( \Sp\cdot \nabla \hp, \bb(t) \right)+ \left(\hp\cdot \nabla \bb(t),\Sp \right) \biggr)& \le 2 \mu C(\Omega) |\Sp| ||\Sp||_{H^1}||\bb(t)||_{1}\\
& \le \gamma^{-1} \left(2\mu C(\Omega)\right)^2 ||\bb(t)||^2_1+ \gamma || \hp||^2_{H^2},
\end{align*}
for any $\gamma>0$.
Also
\begin{align*}
 \mu \left(\Sp\cdot \nabla \Sp, \bu(t) \right)&\le \mu C(\Omega) ||\Sp||_{1} ||\bu(t)||_2 |\Sp|,\\
&\le \varkappa_2^{-1} \left(\mu C(\Omega) \right)^2 ||\bu(t)||^2_{2}+\varkappa_2 ||\Sp||^2_{H^1},
\end{align*}
for any $\varkappa_2>0.$
Thus
\begin{multline*}
 F(\bu,\bb,\hp,\Sp)\le \varkappa_1 ||\hp||^2_{H^1}+\gamma ||\hp||^2_{H^2}+\varkappa_2||\Sp||^2_{H^1}+ C(\Omega)^2 \left(\varkappa_1^{-1}+\varkappa_2^{-1} \mu^2 \right)||\bu(t)||^2_2\\+
\gamma^{-1} \left(2 \mu C(\Omega)\right)^2 ||\bb(t)||^2_1.
\end{multline*}
This last inequality and Korn's inequality  enable us to state that
\begin{equation*}
 \begin{split}
 -\tr(\fd\circ P_m)\ge \mu_1 K(\Omega) \sum_{i=1}^m ||\hp||^2_{H^2}+S\sum_{i=1}^m ||\Sp||^2_{H^1}+ \tilde{K}(\Omega)\sum_{i=1}^m ||\hp||^2_{W^{1,p}}\\
-\gamma \sum_{i=1}^m ||\hp||^2_{H^2}-\varkappa_1 \sum_{i=1}^m ||\hp||^2_{H^1}-\gamma^{-1} \left(\mu C(\Omega)\right)^2 ||\bb(t)||^2_2\\
-\varkappa_2\sum_{i=1}^m ||\Sp||^2_{H^1}-C(\Omega)^2 \left(\varkappa_1^{-1}
+\varkappa_2^{-1} \mu^2 \right)||\bu(t)||^2_2.
 \end{split}
\end{equation*}
It was proved in \cite{BELLOUT3} that there exists a positive constant $d(\Omega)$ such that for any $\sigma>0$
\begin{equation*}
 |\phi|^2_{W^{1,p} }\ge  \begin{cases} \frac{1}{d(\Omega)}||\phi||^2_{H^1} \text{ if } \alpha=0,\\
\frac{\sigma^{1/\delta^\prime}}{\delta^\prime d(\Omega)}||\phi||^2_{H^1}-\sigma^{\frac{1-\delta^\prime}{\delta^\prime}}\left(\frac{1-\delta^\prime}{\delta^\prime}\right)||\phi||^2_{H^2} \text{ otherwise},
 \end{cases}
\end{equation*}
with $\delta^\prime=2\left(\frac{2-\alpha}{4+\alpha}\right)$.
Therefore
\begin{multline*}
 -\tr(\fd \circ P_m)\le \biggl(\mu_1 K(\Omega)-\tilde{K}(\Omega) \sigma^\frac{1-\delta^\prime}{\delta^\prime} \frac{1-\delta^\prime}{\delta^\prime}\biggr)\sum_{i=1}^m ||\hp||^2_{H^2}
-\gamma \sum_{i=1}^m ||\hp||^2_{H^2}\\+\left(\tilde{K}(\Omega) \frac{\sigma^\frac{1}{\delta^\prime} }{\delta^\prime d(\Omega)}-\varkappa_1\right)\sum_{i=1}^m ||\hp||^2_{H^1}
+ (S-\varkappa_2)\sum_{i=1}^m ||\Sp||^2_{H^1}\\- C(\Omega)^2 \left(\varkappa_1^{-1}+ \varkappa_2^{-1} \mu^2\right)||\bu(t)||^2_2-\gamma^{-1} (4 \mu C(\Omega))^2 ||\bb(t)||^2_1.
\end{multline*}
As $\sigma$ and $\gamma$ are arbitrary, we can choose them in such a way that
\begin{equation*}
 \mu_1 K(\Omega)-\tilde{K}(\Omega) \sigma^\frac{1-\delta^\prime}{\delta^\prime} \frac{1-\delta^\prime}{\delta^\prime}=\frac{\mu_1 K(\Omega)}{2},
\end{equation*}
that is $\sigma=\biggl(\dfrac{\mu_1 K(\Omega) \delta^\prime }{\tilde{K}(\Omega) (1-\delta^\prime) } \biggr)^\frac{\delta^\prime}{1-\delta^\prime}$ and $\gamma=\dfrac{\mu_1 K(\Omega) }{2}$.
 This choice implies that
\begin{multline*}
 -\tr(\fd\circ P_m)\ge
\biggl[\dfrac{\tilde{K}(\Omega) }{\delta^\prime d(\Omega)}\biggl(\dfrac{\mu_1 K(\Omega) \delta^\prime}{\tilde{K}(\Omega)(1-\delta^\prime) } \biggr)^\frac{1}{1-\delta^\prime}
-\varkappa_1 \biggr]\sum_{i=1}^m ||\hp||^2_{H^1}+ (S-\varkappa_2)\sum_{i=1}^m ||\Sp||^2_{H^1}\\- C(\Omega)^2 \left( \varkappa_1^{-1} +\varkappa_2^{-1} \mu^2 \right)||\bu(t)||^2_2-
\frac{8 \mu^2}{\mu_1 K(\Omega)}||\bb(t)||^2_1.
\end{multline*}
Choosing $\varkappa_2=S$ and $\varkappa_1=\frac{\gamma^\prime}{2}$, where
\begin{equation}\label{def-gamp}
\gamma^\prime=\dfrac{\tilde{K}(\Omega) }{\delta^\prime d(\Omega)}\biggl(\dfrac{\mu_1 K(\Omega) \delta^\prime}{\tilde{K}(\Omega)(1-\delta^\prime) } \biggr)^\frac{1}{1-\delta^\prime},
\end{equation}
we infer from the last estimate that
\begin{equation*}
 -\tr(\fd\circ P_m)\ge \frac{\gamma^\prime}{2}\sum_{i=1}^m ||\hp||^2_{H^1}-C(\Omega)^2 \left(S^{-1} + \frac{2 \mu^2}{\gamma^\prime} \right)||\bu(t)||^2_2
-\frac{8\mu^2}{\mu_1 K(\Omega)}||\bb(t)||^2_1.
\end{equation*}
Choosing $\hp$ as the eigenfunctions of the Stokes operator whose eigenvalues satify
\begin{equation}
 \lambda_j\ge \tilde{c} \lambda_1 j^n,
\end{equation}
we derive from \cite{Robinson} that
\begin{equation}
 \sum_{i=1}^m ||\hp||^2_{H^1}\ge \tilde{c} \lambda_1 m^{1+2/n}.
\end{equation}
Thus
\begin{equation}
 -\tr(\fd\circ P_m)\ge \frac{\gamma^\prime \tilde{c} \lambda_1}{2} m^{1+2/n}-C(\Omega)^2\left( S^{-1}+ \frac{2\mu^2}{\gamma^\prime}\right)||\bu(t)||^2_2
-\frac{8\mu^2}{\mu_1 K(\Omega)} ||\bb(t)||^2_2.
\end{equation}
To give the bounds for the fractal dimension of $\A$ we also need to estimate the time average of $\lVert\bu(t)\rVert_2^2$ and $\lVert \bb(t)\rVert_1^2$.
From \eqref{14} we derive that
\begin{equation*}
\begin{split}
 \lvert \bu(t)\rvert+ \lvert \bb(t)\rvert+2\mu_1K(\Omega)\int_0^t \lVert \bu(s)\rVert_2^2 ds+ 2 S\int_0^t \lVert \bb(s)\rVert_1^2 ds\le 4\gamma^{-1} \lvert f\rvert^2
+ \gamma \int_0^t \lvert \bu(s)\rvert^2 ds\\+ \lvert\bu_0\rvert^2+\lvert \bb_0\rvert^2.
\end{split}
\end{equation*}
 By using Poincar\'e's inequality and choosing $\gamma=\dfrac{\mu_1K(\Omega)}{2\lambda_1}$we can deduce from the above inequality that
\begin{align*}
\lim_{t\rightarrow \infty} \frac{1}{t}\int_0^t \lVert \bu(s)\rVert_2^2 ds\le \frac{\Lambda}{\mu_1 K(\Omega)},\\
\lim_{t\rightarrow \infty} \frac{1}{t}\int_0^t \lVert \bb(s)\rVert_1^2 ds\le \frac{\Lambda}{S},
\end{align*}
where \begin{equation}\label{def-lambda}
\Lambda= \frac{4 \lambda_1 \lvert f\rvert^2}{\mu_1 K(\Omega)}.
      \end{equation}
Now we see that
\begin{equation}
 -\tilde{q}_m\le \biggl(\frac{C(\Omega)^2}{\mu_1 K(\Omega)} \left(\frac{1}{S}+\frac{2\mu^2}{\gamma^\prime}\right)+ \frac{8\mu^2}{\mu_1 K(\Omega)S}\biggr)\Lambda-
\frac{\gamma^\prime \tilde{c} \lambda_1}{2} m^{1+2/n}.
\end{equation}
Owing to \cite[Theorem 13.16]{Robinson} we see that $d_f(\A)\le m$ if $m$ is the smallest positive integer such that  $-\tilde{q}_m<0$, that is
\begin{equation}
 m-1 <\biggl[\frac{2\Lambda}{\gamma^\prime \tilde{c} \lambda_1 \mu_1 K(\Omega)}\biggl( C^2(\Omega)
\left(\frac{1}{S}+\frac{2 \mu^2}{\gamma^\prime} \right)+\frac{8\mu^2}{S}\biggr)\biggr]^{ \frac{n}{n+2} } < m,
\end{equation}
 where $\gamma^\prime$ and $\Lambda$ are given respectively by \eqref{def-gamp} and \eqref{def-lambda}, and $\delta^\prime=2\left(\dfrac{2-\alpha}{4+\alpha}\right)$.

The conclusion of this section is stated in the following claim that we have just proved above.
\begin{thm}\label{BOUNDS}
 The global attractor $\A$ for the MHD equations for the nonlinear bipolar fluids \eqref{I} is finite-dimensional with
$d_H(\A)\le d_f(\A)\le m$ where $m>0$ is positive integer given by
\begin{equation}
 m-1 <\biggl[\frac{2\Lambda}{\gamma^\prime \tilde{c} \lambda_1 \mu_1 K(\Omega)}\biggl( C^2(\Omega)
\left(\frac{1}{S}+\frac{2 \mu^2}{\gamma^\prime} \right)+\frac{8\mu^2}{S}\biggr)\biggr]^{ \frac{n}{n+2} } < m,
\end{equation}
 and $\gamma^\prime$ and $\Lambda$ are given respectively by \eqref{def-gamp} and \eqref{def-lambda}, and $\delta^\prime=2\left(\dfrac{2-\alpha}{4+\alpha}\right)$.
\end{thm}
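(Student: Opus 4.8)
The plan is to apply the volume-contraction method of Constantin--Foias--Temam in the form presented in \cite[Chapter 13]{Robinson} and \cite[Chapter V]{Temam-Infinite}. Since $\ms(t)$ has already been shown to be uniformly differentiable on $\A$, the evolution of infinitesimal $m$-dimensional volumes along the flow is governed by the linearized operator $\fd$, and the sum $\Lambda_1+\dots+\Lambda_m$ of the $m$ largest uniform Lyapunov exponents is controlled by $-\tilde q_m$. By \cite[Theorem 13.16]{Robinson}, it then suffices to exhibit the smallest integer $m$ for which $-\tilde q_m$ is strictly negative, i.e. for which the time-averaged trace $-\tr(\fd\circ P_m)$ is coercive; that integer bounds $d_f(\A)$, and the Hausdorff bound follows immediately from $d_H(\A)\le d_f(\A)$.

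First I would derive a pointwise lower bound for $-\tr(\fd\circ P_m)=-\sum_{i=1}^m(\fd\Phi_i,\Phi_i)$. The essential positive input is the coercivity estimate \eqref{2-16} for the linearized bipolar stress, which together with the higher-order bipolar term in $\mu_1$ and the magnetic diffusion $S\lvert\nabla\Sp\rvert^2$ furnishes the quadratic coercive part of the trace. Because $p:=2-\alpha$ may be strictly less than $2$, the coercivity delivered by the lower bound in \eqref{2-16} lives naturally in $\mathbb{W}^{1,p}$ rather than in $\h^1$; to convert it into an $\h^1$ estimate I would invoke the interpolation inequality from \cite{BELLOUT3} with exponent $\delta'=2(2-\alpha)/(4+\alpha)$, at the cost of a term controlled by $\sum_i\|\hp\|_{H^2}^2$ that is absorbed into the $\mu_1$ bipolar diffusion after suitable choices of the free parameters $\sigma$ and $\gamma$, producing the positive constant $\gamma'$ of \eqref{def-gamp}.

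Next I would estimate the indefinite part $\sum_i F(\bu,\bb,\hp,\Sp)$ arising from the transport and Lorentz-type couplings. Using H\"older's inequality, the embeddings $\h^2(\Omega)\subset\mathbb{L}^\infty(\Omega)$ and $\h^1(\Omega)\subset\mathbb{L}^4(\Omega)$, Young's inequality, and the orthonormality constraint $\lvert\hp\rvert^2+\lvert\Sp\rvert^2=1$, each term is dominated by a small multiple of $\|\hp\|_{H^1}^2$, $\|\hp\|_{H^2}^2$ or $\|\Sp\|_{H^1}^2$ plus multiples of $\|\bu(t)\|_2^2$ and $\|\bb(t)\|_1^2$. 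Taking $\varkappa_2=S$ and $\varkappa_1=\gamma'/2$ absorbs the summed gradient terms into the coercive part and leaves the clean lower bound $-\tr(\fd\circ P_m)\ge\tfrac{\gamma'}{2}\sum_{i=1}^m\|\hp\|_{H^1}^2-C(\Omega)^2\big(\|\bu(t)\|_2^2+\|\bb(t)\|_1^2\big)$. Choosing the $\hp$ to be Stokes eigenfunctions and applying the generalized Sobolev--Lieb--Thirring eigenvalue bound $\sum_{i=1}^m\|\hp\|_{H^1}^2\ge\tilde c\,\lambda_1 m^{1+2/n}$ from \cite{Robinson} then converts the coercive sum into the required polynomial growth in $m$.

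Finally, to pass from the pointwise bound to $-\tilde q_m$ I need the uniform time averages of $\|\bu(t)\|_2^2$ and $\|\bb(t)\|_1^2$; these follow by integrating the energy identity \eqref{14}, applying Poincar\'e's inequality \eqref{Poincare} and Gronwall's lemma, which give $\limsup_{t\to\infty}\tfrac1t\int_0^t\|\bu\|_2^2\,ds\le\Lambda/(\mu_1 K(\Omega))$ together with the analogue for $\bb$, with $\Lambda$ as in \eqref{def-lambda}. Combining everything yields $-\tilde q_m\ge\tfrac{\gamma'\tilde c\,\lambda_1}{2}m^{1+2/n}-C\Lambda$, so the leading polynomial term forces $-\tilde q_m>0$ once $m$ exceeds the stated threshold; solving $-\tilde q_m\ge 0$ for $m$ produces precisely the explicit inequality of the theorem. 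The main obstacle is the coercivity accounting of the second paragraph: because the shear-thinning exponent weakens the natural $\mathbb{W}^{1,p}$ coercivity, one must thread the interpolation from \cite{BELLOUT3} and balance $\sigma,\gamma,\varkappa_1,\varkappa_2$ so that a genuinely $\h^1$-coercive sum survives with a positive constant; once this is secured, the Lieb--Thirring bound and the trace criterion close the argument routinely.
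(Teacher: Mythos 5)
Your proposal follows essentially the same route as the paper's proof: the Constantin--Foias--Temam trace criterion via \cite[Theorem 13.16]{Robinson}, the coercivity estimate \eqref{2-16} turned into $\mathbb{W}^{1,p}$-coercivity ($p=2-\alpha$) and then into an $\h^1$ bound through the interpolation inequality of \cite{BELLOUT3} with $\delta^\prime=2(2-\alpha)/(4+\alpha)$, the parameter balancing $\sigma,\gamma,\varkappa_1=\gamma^\prime/2,\varkappa_2=S$ producing \eqref{def-gamp}, the Stokes-eigenfunction bound $\sum_{i=1}^m\lVert\hp\rVert^2_{H^1}\ge\tilde{c}\lambda_1 m^{1+2/n}$, and the time averages of $\lVert\bu\rVert_2^2$, $\lVert\bb\rVert_1^2$ giving \eqref{def-lambda}. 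The only blemish is a sign slip in your final paragraph: since the paper's $\tilde{q}_m$ is already the time average of $-\tr(\fd\circ P_m)$, your coercive lower bound is a bound on $\tilde{q}_m$ itself, so the dimension criterion is $\tilde{q}_m>0$ (equivalently $-\tilde{q}_m<0$), exactly as you state correctly in your first paragraph.
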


\section*{Acknowledgment}
The author's research is supported by the Austrian Science Foundation through the Lise-Meitner-Programm  M1487.


\begin{thebibliography}{99}
\bibitem{BABIN+VISHIK}A. V.~Babin and M. I.~Vishik. \newblock{ \em Attractors of evolution equations.}
Volume \textbf{25} of \newblock{\em
 Studies in Mathematics and its Applications}. North-Holland Publishing Co., Amsterdam, 1992.

\bibitem{BELLOUT2} H.~Bellout, F.~Bloom and J.~Ne\v cas. %
\newblock{Phenomenological behavior of multipolar viscous fluids.} %
\newblock{\em Quarterly of Applied Mathematics} \textbf{50}:559-583, 1992.

\bibitem{BELLOUT1} H.~Bellout, F.~Bloom and J.~Ne\v cas. \newblock{Solutions
for incompressible Non-Newtonian fluids}. \newblock{\em C. R. Acad. Sci.
Paris S\'er I. Math.} \textbf{317}:795-800,1993.

\bibitem{BELLOUT4} H.~Bellout, F.~Bloom and J.~Ne\v cas. \newblock{Young
measure-valued solutions for Non-Newtonian incompressible fluids.} %
\newblock{Communication in Partial Differential Equations.} \textbf{19}(11\&
12):1763-1803, 1994.

\bibitem{BELLOUT3} H.~Bellout, F.~Bloom and J.~Ne\v cas. \newblock{Bounds for
the dimensions of the attractors of nonlinear bipolar viscous fluids}. %
\newblock{\em Asymptotic Analysis.} \textbf{11}(2):131-167,1995.

\bibitem{BELLOUT5} H.~Bellout, F.~Bloom and J.~Ne\v cas. \newblock{Existence,
uniqueness and stability of solutions to initial boundary value problems for
bipolar fluids.}  \newblock{\em Differential and Integral Equations}
\textbf{8}:453-464, 1995.

 \bibitem{BISKAMP}D.~Biskamp. \newblock{\em Magnetohydrodynamical Turbulence}. Cambridge University Press, Cambridge, 2003.

\bibitem{Bloom}F.~Bloom. \newblock{Attractors of Non-Newtonian Fluids.}
\newblock{\em Journal of Dynamics and Differential Equations.} \textbf{7}(1): 109-140, 1995.


\bibitem{Chandrasekhar} S.~Chandrasekhar.
\newblock{\em Hydrodynamic and
Hydromagnetic Stability.} Dover, 1981.

\bibitem{VISHIK+CHEPYZHOV}V. V.~Chepyzhov and M. I.~Vishik.
\newblock{Evolution equations and their trajectory attractors}.
\newblock{\em J. Math. Pures Appl.(9)}   \textbf{76}(10): 913-964, 1997.

\bibitem{VISHIK+CHEPYZHOV-2002}V. V.~Chepyzhov and M. I.~Vishik. \newblock{Trajectory and global
attractors of the three-dimensional Navier-Stokes system.}
\newblock{\em Math. Notes}  \textbf{71}(1-2):177-193, 2002.


\bibitem{VISHIK+CHEPYZHOV+WENDLAND}V. V.~Chepyzhov, M. I.~Vishik, and W.~L.~Wendland. \newblock{On non-autonomous sine-Gordon type
equations with a simple global attractor and some averaging.}
 \newblock{\em Discrete Contin. Dyn. Syst.} \textbf{12}(1):27-38, 2005.

\bibitem{DESJARDINS}B.~Desjardins and C.~Le Bris. Remarks on a nonhomogeneous model of magnetohydrodynamics.
\newblock{\em Differential Integral Equations.} \textbf{11}(3):377-394, 1998.


\bibitem{DU} Q.~Du and M.D.~Gunzburger. \newblock{Analysis of Ladyzhenskaya
Model for Incompressible Viscous Flow.}  \newblock{\em Journal of
Mathematical Analysis and Applications.} \textbf{155}:21-45, 1991.

\bibitem{DUVAUT+LIONS}G.~Duvaut and J.-L.~Lions. \newblock{In\'equations en thermo\'elasticit\'e et
magn\'etohydrodynamique.} \newblock{\em Arch. Rational Mech. Anal.}
\textbf{46}: 241-279, 1972.

\bibitem{RUZICKA-2} J.~Freshe and M.~Ruzicka. \newblock{Non-homogeneous
generalized Newtonian fluids.} \newblock{Mathematische
Zeitschrift}. \textbf{260}(2):353-375, 2008.


\bibitem{LEBRIS}J.-F.~Gerbeau, and C.~Le Bris. Existence of solution for a density-dependent magnetohydrodynamic equation.
\emph{Adv. Differential Equations} \textbf{2}(3):427-452, 1997.

\bibitem{GERBEAU+LEBRIS}J.-F.~Gerbeau, C.~Le Bris, and T.~ Leli\`evre. \newblock{\em Mathematical methods for the Magnetohydrodynamics of Liquid Metals.}
 Oxford University Press, New York, 2006.



\bibitem{GUNZBURGER} M.D.~Gunzburger, O.A.~Ladyzhenskaya, and J.S.~Peterson.
 On the global unique solvability of initial-boundary value problems for the coupled modified Navier-Stokes and Maxwell equations.
 \emph{J. Math. Fluid Mech.} \textbf{6}(4):462-482, 2004.

\bibitem{GUNZBURGER2}M.D.~Gunzburger and C.~Trenchea. Analysis of an optimal control problem for the three-dimensional coupled modified Navier-Stokes and Maxwell equations.
\emph{J. Math. Anal. Appl.} \textbf{333}(1):295-310, 2007.




\bibitem{LADY-SOLO}O.A.~ Ladyzhenskaya and V.~Solonnikov. Solution of some nonstationary magnetohydrodynamical
problems for incompressible fluid. \emph{Trudy of Steklov Math. Inst.} \textbf{69}:115-173, 1960.

\bibitem{LADYZHENSKAYA-1}O.~A.~Ladyzhenskaya. \newblock{\em  Mixed Boundary- Value Problem for the Hyperbolic Equation.} Moscow, 1963.


\bibitem{LADY1} O.A.~Ladyzhensakya. \newblock{\em The mathematical theory of
viscous incompressible flow.} Gordon and Breach, New York, 1969.

\bibitem{LADY2} O.A.~Ladyzhensakya. \newblock{New equations for the
description of the viscous incompressible fluids and solvability in the
large of the boundary value problems for them.}  In \newblock{\em Boundary
Value Problems of Mathematical Physics V.} American Mathematical Society,
Providence, RI, 1970.

\bibitem{LADYZHENSKAYA-2} O.~A.~Ladyzhenskaya. \newblock{\em Boundary-Value Problems of Mathematical Physics}, Moscow, 1973.

\bibitem{LIONS} J.-L.~Lions.  \emph{Quelques m\'ethodes de r\'esolution des probl\`emes aux limites non lin\'eaires.} Dunod; Gauthier-Villars, Paris, 1969.


\bibitem{MALEK} J.~Malek, J.~Ne\v cas and A.~Novotn\'y. \newblock{Measure-valued
solutions and asymptotic behavior of a multipolar model of a boundary layer.}
\newblock{\em Czechoslovak Mathematical Journal.} \textbf{42}(3):549-576, 1992.

\bibitem{RUZICKA}J.~M\'alek, J.~Ne\v cas, and M.~R\.{u}\v zi\v cka. 
On weak solutions to a class of non-Newtonian incompressible fluids in bounded three-dimensional domains: the case $p\ge 2$. \newblock{\em Adv. Differential Equations}  
\textbf{6}(3):257-302, 2001. 

\bibitem{MALEKetal}J.~M\'alek, J.~N\v ecas, M.~Rokyta and M. R\.u\v zi\v cka.
\newblock{\em Weak and measure-valued solutions to evolutionary PDEs.} Applied Mathematics and Mathematical Computation, 13. Chapman \& Hall, London, 1996.

\bibitem{NECAS2} J.~Ne\v cas and M.~\v Silhav\'y \newblock{Multipolar viscous
fluids.} \newblock{\em Quaterly of Applied Mathematics.} \textbf{XLIX}(2):247-266,
1991.

\bibitem{NECAS1} J.~Ne\v cas, A.~Novotn\'y and M.~\v Silhav\'y. \newblock{Global
solution to the compressible isothermal multipolar fluids}. \newblock{\em J.
Math. Anal. Appl.} \textbf{162}:223-242, 1991.

\bibitem{PAUL-16} P. A.~Razafimandimby. Trajectory attractor for a
non-autonomous Magnetohydrodynamic equations of Non-Newtonian
Fluids.
\newblock{\em Dyn. Partial Differ. Equ.} \textbf{9}(3): 177-203,
2012.
\bibitem{Robinson}J.~C.~Robinson. \newblock{\em Infinite-Dimensional Dynamical Systems: An Introduction to Dissipative Parabolic PDEs and the Theory of Global Attractors,}
Cambridge Texts in Applied Mathematics, Cambridge University Press, Cambridge, 2001.

\bibitem{SAMOKHIN3} Samokhin, V. N. On a system of equations in the magnetohydrodynamics of nonlinearly viscous media.
\emph{Differential Equations} \textbf{27}(5):628-636, 1991.

\bibitem{SAMOKHIN4}Samokhin, V. N. Existence of a solution of a modification of a system of equations of magnetohydrodynamics.
\emph{Math. USSR-Sb}. \textbf{72}(2):373-385, 1992.

\bibitem{SAMOKHIN2}V. N.~Samokhin. Stationary problems of the magnetohydrodynamics of non-Newtonian media.
\emph{Siberian Math. J.} \textbf{33}(4):654-662, 1993.

\bibitem{SAMOKHIN}V. N.~Samokhin. The operator form and solvability of equations of the magnetohydrodynamics of nonlinearly viscous media.
  \emph{Differ. Equ.} \textbf{36}(6):904-910, 2000.

\bibitem{SERMANGE}M.~Sermange and R.~Temam. Some mathematical questions related to the MHD equations.
\newblock{\em Comm. Pure Appl. Math.} \textbf{36}(5):635-664, 1983.

\bibitem{STUPELIS} L.~Stupyalis. An initial-boundary value problem for a system of equations of magnetohydrodynamics. \newblock{\em Lithuanian Math. J.} \textbf{40}(2):176-196, 2000.

\bibitem{Temam} R.~Temam. \newblock {\em Navier-{S}tokes {E}quations}. %
\newblock North-Holland, 1979.

\bibitem{Temam-Infinite}R.~Temam. \newblock{\em Infinite-dimensional dynamical systems in mechanics and physics.} In: Applied Mathematical Sciences,
vol. \textbf{68}. Springer-Verlag, New York, 1988.


\end{thebibliography}
\end{document}